
\documentclass[12pt,reqno]{amsart}
\usepackage{amsfonts}
\usepackage{amssymb,amsmath}
\usepackage{amscd}
\usepackage{hyperref}
\usepackage{graphicx}
\usepackage{epsfig}
\usepackage{epstopdf}
\usepackage{caption}
\usepackage{subcaption}

\setcounter{MaxMatrixCols}{10}

\newtheorem{theorem}{Theorem}[section]
\newtheorem{lemma}{Lemma}[section]
\newtheorem{proposition}{Proposition}[section]
\newtheorem{corollary}{Corollary}[section]
\newtheorem{definition}{Definition}[section]
\newtheorem{example}{Example}[section]

\numberwithin{equation}{section}
\textwidth145mm

\begin{document}
\title[A New Generalization of Rhoades' Condition]{A New Generalization of
Rhoades' Condition}
\author[N. TA\c{S}]{N\.{I}HAL TA\c{S}}
\address{Bal\i kesir University\\
Department of Mathematics\\
10145 Bal\i kesir, TURKEY}
\email{nihaltas@balikesir.edu.tr}
\author[N. \"{O}ZG\"{U}R ]{N\.{I}HAL \"{O}ZG\"{U}R}
\address{Bal\i kesir University\\
Department of Mathematics\\
10145 Bal\i kesir, TURKEY}
\email{nihal@balikesir.edu.tr}
\keywords{$S$-metric space, $S$-normed space, fixed point theorem, Rhoades'
condition.}
\subjclass[2010]{54E35, 54E40, 54E45, 54E50.}

\begin{abstract}
In this paper, our aim is to obtain a new generalization of the well-kown Rhoades' contractive
condition. To do this, we introduce the notion of an $S$-normed space. We extend the Rhoades' contractive condition
to $S$-normed spaces and define a new type of contractive conditions. We support our theoretical results with necessary illustrative examples.
\end{abstract}

\maketitle

\section{\textbf{Introduction}}

\label{sec:1}

Metric fixed point theory is important to find some applications in many
areas such as topology, analysis, differential equations etc. So different
generalizations of metric spaces were studied (see \cite{O Ege}, \cite{O Ege
2019}, \cite{ege}, \cite{gupta}, \cite{Mohanta}, \cite{Mustafa} and \cite%
{sedghi2}). For example, Mustafa and Sims introduced a new notion of
\textquotedblleft $G$-metric space\textquotedblright\ \cite{Mustafa}.
Mohanta proved some fixed point theorems for self-mappings satisfying some
kind of contractive type conditions on complete $G$-metric spaces \cite%
{Mohanta}.

Recently Sedghi, Shobe and Aliouche have defined the concept of $S$-metric
spaces in \cite{sedghi2} as follows:

\begin{definition}
\cite{sedghi2} \label{def1} Let $X$ be a nonempty set and $\mathcal{S}%
:X\times X\times X\rightarrow \lbrack 0,\infty )$ be a function satisfying
the following conditions for all $x,y,z,a\in X:$

$\mathbf{(S1)}$\textbf{\ }$\mathcal{S}(x,y,z)=0$ if and only if $x=y=z$,

$\mathbf{(S2)}$\textbf{\ }$\mathcal{S}(x,y,z)\leq \mathcal{S}(x,x,a)+%
\mathcal{S}(y,y,a)+\mathcal{S}(z,z,a)$.
\end{definition}

Then $\mathcal{S}$ is called an $S$-metric on $X$ and the pair $(X,\mathcal{S%
})$ is called an $S$-metric space.

Let $(X,d)$ be a complete metric space and $T$ be a self-mapping of $X$. In
\cite{rhoades}, $T$ is called a Rhoades' mapping if the following condition
is satisfied:%
\begin{equation*}
\mathbf{(R25)}\text{ \ \ \ \ }d(Tx,Ty)<\max
\{d(x,y),d(x,Tx),d(y,Ty),d(x,Ty),d(y,Tx)\},
\end{equation*}%
for each $x,y\in X$, $x\neq y.$ It was not given any fixed point result for
a Rhoades' mapping in \cite{rhoades}. Since then, many fixed point theorems
were obtained by several authors for a Rhoades' mapping (see \cite{chang},
\cite{sen} and \cite{liu}). Furthermore, the Rhoades' condition was extended
on $S$-metric spaces and it was presented new fixed point results (see \cite%
{nihal}, \cite{nihal2} and \cite{PhD}). Now we recall the Rhoades' condition
on an $S$-metric space.

Let $(X,\mathcal{S})$ be an $S$-metric space and $T$ be a self-mapping of $X$%
. In \cite{nihal} and \cite{PhD}, the present authors defined Rhoades'
condition $\mathbf{(S25)}$ on $(X,\mathcal{S})$ as follows$:$%
\begin{eqnarray*}
\mathbf{(S25)}\text{ \ \ \ \ }\mathcal{S}(Tx,Tx,Ty) &<&max\{\mathcal{S}%
(x,x,y),\mathcal{S}(Tx,Tx,x),\mathcal{S}(Ty,Ty,y), \\
&&\mathcal{S}(Ty,Ty,x),\mathcal{S}(Tx,Tx,y)\},
\end{eqnarray*}%
for each $x,y\in X$, $x\neq y$.

In this paper, to obtain a new generalization of the Rhoades' condition, we
introduce the notion of an $S$-normed space. We give some basic concepts and
topological definitions related to an $S$-norm. Then, we study a new form of
Rhoades' condition $\mathbf{(R25)}$ on $S$-normed spaces and obtain a fixed
point theorem. In Section \ref{sec:2}, we introduce the definition of an $S$%
-norm on $X$ and investigate some basic properties which are needed in the
sequel. We investigate the relationships among an $S$-norm and other known
concepts by counter examples. In Section \ref{sec:3}, we define Rhoades'
condition $(\mathbf{NS25})$ on an $S$-normed space. We study a fixed point
theorem using the condition $(\mathbf{NS25})$ and the notions of reflexive $%
S $-Banach space, $S$-normality, closure property and convexity. In Section %
\ref{sec:4}, we investigate some comparisons on $S$-normed spaces such as
the relationships between the conditions $(\mathbf{NR25})$ and $(\mathbf{NS25%
})$.

\section{$S$\textbf{-Normed Spaces}}

\label{sec:2} In this section, we introduce the notion of an $S$-normed
space and investigate some basic concepts related to an $S$-norm. We study
the relationships between an $S$-metric and an $S$-norm (resp. an $S$-norm
and a norm).

\begin{definition}
\label{def2} Let $X$ be a real vector space. A real valued function $\Vert
.,.,.\Vert :X\times X\times X\rightarrow
\mathbb{R}
$ is called an $S$-norm on $X$ if the following conditions hold:

$\mathbf{(NS1)}$ $\Vert x,y,z\Vert \geq 0$ and $\Vert x,y,z\Vert =0$ if and
only if $x=y=z=0$,

$\mathbf{(NS2)}$ $\Vert \lambda x,\lambda y,\lambda z\Vert =\mid \lambda
\mid \Vert x,y,z\Vert $ for all $\lambda \in
\mathbb{R}
$ and $x,y,z\in X$,

$\mathbf{(NS3)}$ $\Vert x+x^{\prime },y+y^{\prime },z+z^{\prime }\Vert \leq
\Vert 0,x,z^{\prime }\Vert +\Vert 0,y,x^{\prime }\Vert +\Vert 0,z,y^{\prime
}\Vert $ for all $x,y,z,x^{\prime },y^{\prime },z^{\prime }\in X$.

The pair $(X,\Vert .,.,.\Vert )$ is called an $S$-normed space.
\end{definition}

\begin{example}
\label{exm1} Let $X=%
\mathbb{R}
$ and $\Vert .,.,.\Vert :X\times X\times X\rightarrow
\mathbb{R}
$ be a function defined by%
\begin{equation*}
\Vert x,y,z\Vert =\mid x\mid +\mid y\mid +\mid z\mid ,
\end{equation*}%
for all $x,y,z\in X$. Then $(X,\Vert .,.,.\Vert )$ is an $S$-normed space.
Indeed, we show that the function $\Vert x,y,z\Vert =\mid x\mid +\mid y\mid
+\mid z\mid $ satisfies the conditions $(\mathbf{NS1})$, $(\mathbf{NS2})$
and $(\mathbf{NS3})$.

$(\mathbf{NS1})$ By the definition, clearly we have $\Vert x,y,z\Vert \geq 0$
for all $x,y,z\in X$. If $\Vert x,y,z\Vert =\mid x\mid +\mid y\mid +\mid
z\mid =0$, we obtain $x=y=z=0$.

$(\mathbf{NS2})$ Let $x,y,z\in X$ and $\lambda \in
\mathbb{R}
$. Then we have%
\begin{eqnarray*}
\Vert \lambda x,\lambda y,\lambda z\Vert &=&\mid \lambda x\mid +\mid \lambda
y\mid +\mid \lambda z\mid =\mid \lambda \mid \mid x\mid +\mid \lambda \mid
\mid y\mid +\mid \lambda \mid \mid z\mid \\
&=&\mid \lambda \mid (\mid x\mid +\mid y\mid +\mid z\mid )=\mid \lambda \mid
\Vert x,y,z\Vert .
\end{eqnarray*}
$(\mathbf{NS3})$ Let $x,y,z,x^{\prime },y^{\prime },z^{\prime }\in X$. Then
we obtain%
\begin{eqnarray*}
\Vert x+x^{\prime },y+y^{\prime },z+z^{\prime }\Vert &=&\mid x+x^{\prime
}\mid +\mid y+y^{\prime }\mid +\mid z+z^{\prime }\mid \\
&\leq &\mid x\mid +\mid x^{\prime }\mid +\mid y\mid +\mid y^{\prime }\mid
+\mid z\mid +\mid z^{\prime }\mid \\
&\leq &\mid 0\mid +\mid x\mid +\mid z^{\prime }\mid +\mid 0\mid +\mid y\mid
+\mid x^{\prime }\mid +\mid 0\mid +\mid z\mid +\mid y^{\prime }\mid \\
&=&\Vert 0,x,z^{\prime }\Vert +\Vert 0,y,x^{\prime }\Vert +\Vert
0,z,y^{\prime }\Vert .
\end{eqnarray*}%
Consequently, the function $\Vert x,y,z\Vert =\mid x\mid +\mid y\mid +\mid
z\mid $ satisfies the conditions $(\mathbf{NS1})$, $(\mathbf{NS2})$, $(%
\mathbf{NS3})$ and so $(X,\Vert .,.,.\Vert )$ is an $S$-normed space.
\end{example}

Now, we show that every $S$-norm generates an $S$-metric.

\begin{proposition}
\label{prop1} Let $(X,\Vert .,.,.\Vert )$ be an $S$-normed space. Then the
function $\mathcal{S}:X\times X\times X\rightarrow \lbrack 0,\infty )$
defined by%
\begin{equation}
S(x,y,z)=\Vert x-y,y-z,z-x\Vert  \label{eqn31}
\end{equation}%
is an $S$-metric on $X$.
\end{proposition}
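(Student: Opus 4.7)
The plan is to verify the two defining axioms (S1) and (S2) for $\mathcal{S}(x,y,z)=\|x-y,y-z,z-x\|$, drawing each from the corresponding $S$-norm axiom.

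For (S1), I would simply unfold the definition: $\mathcal{S}(x,y,z)=0$ means $\|x-y,y-z,z-x\|=0$, and by (NS1) this is equivalent to $x-y=y-z=z-x=0$, that is $x=y=z$. The converse direction, that $x=y=z$ forces $\mathcal{S}(x,y,z)=0$, is also immediate from (NS1).

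For (S2), the idea is to massage (NS3) with a carefully chosen splitting so that the right-hand side produces exactly $\mathcal{S}(x,x,a)+\mathcal{S}(y,y,a)+\mathcal{S}(z,z,a)$. Observe first that
\[
\mathcal{S}(x,x,a)=\|0,x-a,a-x\|, \quad \mathcal{S}(y,y,a)=\|0,y-a,a-y\|, \quad \mathcal{S}(z,z,a)=\|0,z-a,a-z\|.
\]
I would then write
\[
x-y=(x-a)+(a-y), \qquad y-z=(y-a)+(a-z), \qquad z-x=(z-a)+(a-x),
\]
and apply (NS3) with the choices $u=x-a,\ u'=a-y,\ v=y-a,\ v'=a-z,\ w=z-a,\ w'=a-x$. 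This yields
\[
\|x-y,y-z,z-x\|\le \|0,x-a,a-x\|+\|0,y-a,a-y\|+\|0,z-a,a-z\|,
\]
which is precisely (S2).

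The only real decision point is the matching of the six vectors in (NS3) to the six pieces we produced from the splittings; once one notices that (NS3) pairs $u$ with $w'$, $v$ with $u'$, and $w$ with $v'$ inside the three slots, the cyclic assignment above makes the expression collapse to the desired sum. So I do not expect a genuine obstacle, only the bookkeeping of lining up the arguments of (NS3) correctly; the rest is automatic from (NS1) and (NS3).
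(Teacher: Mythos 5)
Your proposal is correct and follows essentially the same route as the paper: (S1) is read off from (NS1), and (S2) is obtained by the identical splitting $x-y=(x-a)+(a-y)$, $y-z=(y-a)+(a-z)$, $z-x=(z-a)+(a-x)$ fed into (NS3), which collapses to $S(x,x,a)+S(y,y,a)+S(z,z,a)$ exactly as in the paper's proof.
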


\begin{proof}
Using the condition $(\mathbf{NS1})$, it can be easily seen that the
condition $(\mathbf{S1})$ is satisfied. We show that the condition $(\mathbf{%
S2})$ is satisfied. By $(\mathbf{NS3})$, we have%
\begin{eqnarray*}
S(x,y,z) &=&\Vert x-y,y-z,z-x\Vert \\
&=&\Vert x-a+a-y,y-a+a-z,z-a+a-x\Vert \\
&\leq &\Vert 0,x-a,a-x\Vert +\Vert 0,y-a,a-y\Vert +\Vert 0,z-a,a-z\Vert \\
&=&S(x,x,a)+S(y,y,a)+S(z,z,a),
\end{eqnarray*}%
for all $x,y,z,a\in X$.

Then, the function $\mathcal{S}$ is an $S$-metric and the pair $(X,S)$ is an
$S$-metric space.
\end{proof}

We call the $S$-metric defined in (\ref{eqn31}) as the $S$-metric generated
by the $S$-norm $\Vert .,.,.\Vert $ and denoted by $S_{\Vert .\Vert }$.

\begin{corollary}
\label{cor1} Every $S$-normed space is an $S$-metric space.
\end{corollary}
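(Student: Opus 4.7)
The plan is to deduce the corollary as a direct consequence of Proposition \ref{prop1}. Since the proposition has already shown that every $S$-norm generates an $S$-metric via the explicit formula $S(x,y,z) = \|x-y, y-z, z-x\|$, there is essentially no further work required: given an arbitrary $S$-normed space $(X, \|.,.,.\|)$, one simply produces the associated $S$-metric $S_{\|.\|}$ and observes that $(X, S_{\|.\|})$ is an $S$-metric space.

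Concretely, I would structure the proof in a single line. Let $(X, \|.,.,.\|)$ be an $S$-normed space. By Proposition \ref{prop1}, the function $S_{\|.\|}: X \times X \times X \to [0, \infty)$ defined by $S_{\|.\|}(x,y,z) = \|x-y, y-z, z-x\|$ satisfies conditions $\mathbf{(S1)}$ and $\mathbf{(S2)}$ of Definition \ref{def1}. Hence $(X, S_{\|.\|})$ is an $S$-metric space, and the conclusion follows.

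There is no real obstacle here; the only thing to be careful about is that the corollary is a statement about the underlying set $X$ admitting an $S$-metric structure, not about the $S$-norm and $S$-metric being equivalent in any stronger topological sense. If one wished to be more explicit, one could also remark that this canonical $S$-metric $S_{\|.\|}$ is the natural one making the translation maps on $X$ isometries, but such commentary is not needed for the bare statement of the corollary.
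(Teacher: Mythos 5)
Your proposal is correct and matches the paper exactly: the corollary is stated as an immediate consequence of Proposition \ref{prop1}, with the generated $S$-metric $S_{\Vert .\Vert }(x,y,z)=\Vert x-y,y-z,z-x\Vert $ furnishing the required $S$-metric structure on $X$. No further argument is needed.
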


\begin{example}
\label{exm2} Let $X$ be a nonempty set, $(X,d)$ be a metric space and $%
\mathcal{S}:X\times X\times X\rightarrow \left[ 0,\infty \right) $ be the
function defined by%
\begin{equation*}
\mathcal{S}(x,y,z)=d(x,y)+d(x,z)+d(y,z),
\end{equation*}%
for all $x,y,z\in X$. Then the function is an $S$-metric on $X$ \cite%
{sedghi2}.

Let $X=%
\mathbb{R}
$. If we consider the usual metric $d$ on $X$, we obtain the $S$-metric
\begin{equation*}
\mathcal{S}(x,y,z)=\mid x-y\mid +\mid x-z\mid +\mid y-z\mid ,
\end{equation*}%
for all $x,y,z\in
\mathbb{R}
$. Using Proposition \ref{prop1}, we see that $\mathcal{S}$ is generated by
the $S$-norm defined in Example \ref{exm1}. Indeed we have
\begin{eqnarray*}
S(x,y,z) &=&\Vert x-y,y-z,z-x\Vert \\
&=&\mid x-y\mid +\mid y-z\mid +\mid z-x\mid \\
&=&\mid x-y\mid +\mid x-z\mid +\mid y-z\mid \\
&=&d(x,y)+d(x,z)+d(y,z),
\end{eqnarray*}%
for all $x,y,z\in X$.
\end{example}

\begin{lemma}
\label{lem5} An $S$-metric $\mathcal{S}$ generated by an $S$-norm on an $S$%
-normed space $X$ satisfies the following conditions

\begin{enumerate}
\item $\mathcal{S}(x+a,y+a,z+a)=\mathcal{S}(x,y,z)$,

\item $\mathcal{S}(\lambda x,\lambda y,\lambda z)=\left\vert \lambda
\right\vert \mathcal{S}(x,y,z)$,
\end{enumerate}

for $x,y,z,a\in X$ and every scalar $\lambda $.
\end{lemma}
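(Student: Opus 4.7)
The plan is to prove both identities by direct substitution into the defining formula $\mathcal{S}(x,y,z)=\|x-y,y-z,z-x\|$ from Proposition \ref{prop1}, using only the algebraic structure of the underlying real vector space together with the axioms $(\mathbf{NS1})$--$(\mathbf{NS3})$ of the $S$-norm.

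For item (1), I would simply compute
\[
\mathcal{S}(x+a,y+a,z+a)=\|(x+a)-(y+a),(y+a)-(z+a),(z+a)-(x+a)\|,
\]
observe that the translations by $a$ cancel in each slot, and conclude that the right-hand side equals $\|x-y,y-z,z-x\|=\mathcal{S}(x,y,z)$. No $S$-norm axiom beyond the fact that $\|.,.,.\|$ is a well-defined function is needed here; this step is purely algebraic in the vector space $X$.

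For item (2), I would expand
\[
\mathcal{S}(\lambda x,\lambda y,\lambda z)=\|\lambda x-\lambda y,\lambda y-\lambda z,\lambda z-\lambda x\|=\|\lambda(x-y),\lambda(y-z),\lambda(z-x)\|,
\]
and then apply the absolute homogeneity axiom $(\mathbf{NS2})$ to pull out the factor $|\lambda|$, yielding $|\lambda|\,\|x-y,y-z,z-x\|=|\lambda|\,\mathcal{S}(x,y,z)$.

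There is no real obstacle in this lemma; both assertions follow from a single line of algebra each, with (1) requiring only that addition in $X$ is associative and commutative and (2) requiring a single invocation of $(\mathbf{NS2})$. Accordingly, my proof will just present these two short displays.
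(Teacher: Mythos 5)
Your proposal is correct and is exactly the argument the paper intends: the paper's proof of Lemma \ref{lem5} simply states that the result ``follows easily from Proposition \ref{prop1},'' i.e.\ from the formula $\mathcal{S}(x,y,z)=\Vert x-y,y-z,z-x\Vert$, and your two one-line computations (cancellation of $a$ for translation invariance, and $(\mathbf{NS2})$ for absolute homogeneity) are precisely the omitted details.
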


\begin{proof}
The proof follows easily from the Proposition \ref{prop1}.
\end{proof}

We note that every $S$-metric can not be generated by an $S$-norm as we have
seen in the following example:

\begin{example}
\label{exm3} Let $X$ be a nonempty set and a function $S:X\times X\times
X\rightarrow \left[ 0,\infty \right) $ be defined by%
\begin{equation*}
S(x,y,z)=\left\{
\begin{array}{ccc}
0 & ; & \text{if }x=y=z \\
1 & ; & \text{otherwise}%
\end{array}%
,\right.
\end{equation*}%
for all $x,y,z\in X$. Then the function $\mathcal{S}$ is an $S$-metric on $X$%
. We call this $S$-metric is the discrete $S$-metric on $X$. The pair $(X,S)$
is called discrete $S$-metric space. Now, we prove that this $S$-metric can
not be generated by an $S$-norm. On the contrary, we assume that this $S$%
-metric is generated by an $S$-norm. Then the following equation should be
satisfied $:$%
\begin{equation*}
S(x,y,z)=\Vert x-y,y-z,z-x\Vert ,
\end{equation*}%
for all $x,y,z\in X$.

If we consider the case $x=y\neq z$ and $\mid \lambda \mid \neq 0,1$ then we
obtain%
\begin{eqnarray*}
S(\lambda x,\lambda y,\lambda z) &=&\Vert 0,\lambda (y-z),\lambda (z-x)\Vert
=1 \\
&\neq &\mid \lambda \mid S(x,y,z)=\mid \lambda \mid \Vert 0,y-z,z-x\Vert
=\mid \lambda \mid \text{,}
\end{eqnarray*}%
which is a contradiction with $(\mathbf{NS2})$. Consequently, this $S$%
-metric can not be generated by an $S$-norm.
\end{example}

We use the following result in the next section.

\begin{lemma}
\label{lem3} Let $(X,\Vert .,.,.\Vert )$ be an $S$-normed space. We have%
\begin{equation*}
\Vert 0,x-y,y-x\Vert =\Vert 0,y-x,x-y\Vert ,
\end{equation*}%
for each $x,y\in X$.
\end{lemma}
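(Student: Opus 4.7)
The plan is to deduce this symmetry directly from axiom $(\mathbf{NS2})$ by exploiting the scalar $\lambda = -1$. The key observation is that negating the vector triple $(0, x-y, y-x)$ componentwise produces the triple $(0, y-x, x-y)$, and $(\mathbf{NS2})$ controls exactly how such a uniform scaling affects the $S$-norm.

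First I would apply $(\mathbf{NS2})$ with $\lambda = -1$ and the specific arguments $x_0 = 0$, $y_0 = x-y$, $z_0 = y-x$. This yields
\begin{equation*}
\Vert (-1)\cdot 0, (-1)(x-y), (-1)(y-x)\Vert = |-1|\,\Vert 0, x-y, y-x\Vert.
\end{equation*}
Then I would simplify both sides: on the left, $(-1)\cdot 0 = 0$, $(-1)(x-y) = y-x$, and $(-1)(y-x) = x-y$, while on the right $|-1| = 1$. The resulting identity is exactly
\begin{equation*}
\Vert 0, y-x, x-y\Vert = \Vert 0, x-y, y-x\Vert,
\end{equation*}
which is the desired conclusion.

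There is no real obstacle here; the lemma is essentially the statement that swapping the second and third slots of an $S$-norm is equivalent to negating all arguments, which is precisely what homogeneity $(\mathbf{NS2})$ provides when the first slot is $0$. The argument does not use $(\mathbf{NS1})$ or $(\mathbf{NS3})$ at all.
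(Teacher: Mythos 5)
Your proof is correct, and it takes a genuinely different route from the paper's. You invoke homogeneity $(\mathbf{NS2})$ with $\lambda=-1$, observing that componentwise negation of the triple $(0,x-y,y-x)$ yields $(0,y-x,x-y)$, so the two norms coincide in a single step. The paper instead uses the triangle-type axiom $(\mathbf{NS3})$: choosing the decomposition so that two of the three terms on the right-hand side are $\Vert 0,0,0\Vert=0$ (which requires $(\mathbf{NS1})$), it obtains $\Vert 0,x-y,y-x\Vert \le \Vert 0,y-x,x-y\Vert$ and the reverse inequality, and concludes equality from the two bounds. Your argument is shorter and isolates the relevant structure more cleanly --- the symmetry is really a consequence of absolute homogeneity alone, with no need for $(\mathbf{NS1})$ or $(\mathbf{NS3})$. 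The paper's approach has the mild virtue of showing that the same conclusion already follows from the subadditivity axiom, which is the tool it reuses elsewhere (e.g.\ in Proposition \ref{prop4}), but there is nothing defective in either route.
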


\begin{proof}
By the condition $(\mathbf{NS3})$, we get%
\begin{equation}
\Vert 0,x-y,y-x\Vert \leq \Vert 0,0,0\Vert +\Vert 0,0,0\Vert +\Vert
0,y-x,x-y\Vert =\Vert 0,y-x,x-y\Vert   \label{eqn29}
\end{equation}%
and%
\begin{equation}
\Vert 0,y-x,x-y\Vert \leq \Vert 0,0,0\Vert +\Vert 0,0,0\Vert +\Vert
0,x-y,y-x\Vert =\Vert 0,x-y,y-x\Vert .  \label{eqn30}
\end{equation}%
Using (\ref{eqn29}) and (\ref{eqn30}) we obtain $\Vert 0,x-y,y-x\Vert =\Vert
0,y-x,x-y\Vert $.
\end{proof}

We recall the definition of a norm on $X$ as follows.

Let $X$ be a real vector space. A real valued function $\left\Vert
.\right\Vert :X\rightarrow
\mathbb{R}
$ is called a norm on $X$ if the following conditions hold:

$(\mathbf{N1})$ $\Vert x\Vert \geq 0$ for all $x\in X$.

$(\mathbf{N2})$ $\Vert x\Vert =0$ if and only if $x=0$ for all $x\in X$.

$(\mathbf{N3})$ $\Vert \lambda x\Vert =\mid \lambda \mid \Vert x\Vert $ for
all $\lambda \in
\mathbb{R}
$ and $x\in X$.

$(\mathbf{N4})$ $\Vert x+y\Vert \leq \Vert x\Vert +\Vert y\Vert $ for all $%
x,y\in X$.

The pair $(X,\Vert .\Vert )$ is called a normed space.

We show that every norm generates an $S$-norm. We give the following
proposition.

\begin{proposition}
\label{prop2} Let $(X,\Vert .\Vert )$ be a normed space and a function $%
\Vert .,.,.\Vert :X\times X\times X\rightarrow
\mathbb{R}
$ be defined by%
\begin{equation}
\Vert x,y,z\Vert =\Vert x\Vert +\Vert y\Vert +\Vert z\Vert ,  \label{eqn32}
\end{equation}%
for all $x,y,z\in X$. Then $(X,\Vert .,.,.\Vert )$ is an $S$-normed space.
\end{proposition}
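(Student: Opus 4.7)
The plan is to verify the three defining axioms \textbf{(NS1)}, \textbf{(NS2)}, \textbf{(NS3)} of an $S$-norm one at a time, in each case reducing to the corresponding norm axiom \textbf{(N1)}--\textbf{(N4)} applied coordinate by coordinate.

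For \textbf{(NS1)}, nonnegativity of $\|x,y,z\|=\|x\|+\|y\|+\|z\|$ is immediate from \textbf{(N1)}. The equality $\|x,y,z\|=0$ forces each of the three nonnegative summands $\|x\|,\|y\|,\|z\|$ to vanish, whence \textbf{(N2)} gives $x=y=z=0$. For \textbf{(NS2)}, a direct computation using \textbf{(N3)} coordinatewise yields
\begin{equation*}
\|\lambda x,\lambda y,\lambda z\|=\|\lambda x\|+\|\lambda y\|+\|\lambda z\|=|\lambda|(\|x\|+\|y\|+\|z\|)=|\lambda|\,\|x,y,z\|.
\end{equation*}

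The only step that needs a small estimate is \textbf{(NS3)}. I would apply the triangle inequality \textbf{(N4)} in each slot to bound the left-hand side:
\begin{equation*}
\|x+x',y+y',z+z'\|\leq \|x\|+\|x'\|+\|y\|+\|y'\|+\|z\|+\|z'\|,
\end{equation*}
and then recognise that the right-hand side of \textbf{(NS3)} expands (using $\|0\|=0$) to exactly the same sum $\|x\|+\|y\|+\|z\|+\|x'\|+\|y'\|+\|z'\|$, just regrouped.

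There is no real obstacle here: the proof is a routine bookkeeping argument, and the slight asymmetry in the right-hand side of \textbf{(NS3)} (pairing $x$ with $z'$, $y$ with $x'$, $z$ with $y'$) is harmless because the norm ignores the pairing and only the total sum matters. The conclusion is that $(X,\|.,.,.\|)$ satisfies all three axioms and is therefore an $S$-normed space.
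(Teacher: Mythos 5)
Your proof is correct and follows essentially the same route as the paper: both verify \textbf{(NS1)} and \textbf{(NS2)} by the obvious coordinatewise reductions to \textbf{(N1)}--\textbf{(N3)}, and both establish \textbf{(NS3)} by applying the triangle inequality in each slot and observing that the right-hand side of \textbf{(NS3)} is the same six-term sum regrouped (with $\|0\|=0$ inserted). No discrepancies to report.
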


\begin{proof}
We show that the function $\Vert x,y,z\Vert =\Vert x\Vert +\Vert y\Vert
+\Vert z\Vert $ satisfies the conditions $(\mathbf{NS1})$, $(\mathbf{NS2})$
and $(\mathbf{NS3})$.

$(\mathbf{NS1})$ It is clear that $\Vert x,y,z\Vert \geq 0$ and $\Vert
x,y,z\Vert =0$ if and only if $x=y=z=0$.

$(\mathbf{NS2})$ Let $\lambda \in
\mathbb{R}
$ and $x,y,z\in X$. Then we obtain%
\begin{eqnarray*}
\Vert \lambda x,\lambda y,\lambda z\Vert &=&\Vert \lambda x\Vert +\Vert
\lambda y\Vert +\Vert \lambda z\Vert \\
&=&\mid \lambda \mid \Vert x\Vert +\mid \lambda \mid \Vert y\Vert +\mid
\lambda \mid \Vert z\Vert \\
&=&\mid \lambda \mid (\Vert x\Vert +\Vert y\Vert +\Vert z\Vert ) \\
&=&\mid \lambda \mid \Vert x,y,z\Vert .
\end{eqnarray*}%
$(\mathbf{NS3})$ Let $x,y,z,x^{\prime },y^{\prime },z^{\prime }\in X$. Then
we obtain%
\begin{eqnarray*}
\Vert x+x^{\prime },y+y^{\prime },z+z^{\prime }\Vert &=&\Vert x+x^{\prime
}\Vert +\Vert y+y^{\prime }\Vert +\Vert z+z^{\prime }\Vert \\
&\leq &\Vert x\Vert +\Vert x^{\prime }\Vert +\Vert y\Vert +\Vert y^{\prime
}\Vert +\Vert z\Vert +\Vert z^{\prime }\Vert \\
&=&\Vert 0\Vert +\Vert x\Vert +\Vert z^{\prime }\Vert +\Vert 0\Vert +\Vert
y\Vert +\Vert x^{\prime }\Vert +\Vert 0\Vert +\Vert z\Vert +\Vert y^{\prime
}\Vert \\
&=&\Vert 0,x,z^{\prime }\Vert +\Vert 0,y,x^{\prime }\Vert +\Vert
0,z,y^{\prime }\Vert .
\end{eqnarray*}%
Consequently, the function $\Vert x,y,z\Vert =\Vert x\Vert +\Vert y\Vert
+\Vert z\Vert $ satisfies the conditions $(\mathbf{NS1})$, $(\mathbf{NS2})$,
$(\mathbf{NS3})$ and so $(X,\Vert .,.,.\Vert )$ is an $S$-normed space.
\end{proof}

We have proved that every norm on $X$ defines an $S$-norm on $X$. We call
the $S$-norm defined in (\ref{eqn32}) as the $S$-norm generated by the norm $%
\Vert .\Vert $. For example, the $S$-norm defined in Example \ref{exm1} is
the $S$-norm generated by the usual norm on $%
\mathbb{R}
$.

There exists an $S$-norm which is not generated by a norm as we have seen in
the following example.

\begin{example}
\label{exm6} Let $X$ be a nonempty set and the function $\Vert .,.,.\Vert
:X\times X\times X\rightarrow
\mathbb{R}
$ be defined by%
\begin{equation*}
\Vert x,y,z\Vert =\mid x-2y-2z\mid +\mid y-2x-2z\mid +\mid z-2y-2x\mid \text{%
,}
\end{equation*}%
for all $x,y,z\in X$. Then, the function $\Vert x,y,z\Vert $ is an $S$-norm
on $X$, but it is not generated by a norm.

Now, we show that the conditions $(\mathbf{NS1})$, $(\mathbf{NS2})$ and $(%
\mathbf{NS3})$ are satisfied.

$(\mathbf{NS1})$ By the definition, clearly we obtain $\Vert x,y,z\Vert \geq
0$ and $\Vert x,y,z\Vert =0$ if and only if $x=y=z=0$ for all $x,y,z\in X$.

$(\mathbf{NS2})$ We have%
\begin{eqnarray*}
\Vert \lambda x,\lambda y,\lambda z\Vert &=&\mid \lambda x-2\lambda
y-2\lambda z\mid +\mid \lambda y-2\lambda x-2\lambda z\mid +\mid \lambda
z-2\lambda y-2\lambda x\mid \\
&=&\mid \lambda \mid (\mid x-2y-2z\mid +\mid y-2x-2z\mid +\mid z-2y-2x\mid )
\\
&=&\mid \lambda \mid \Vert x,y,z\Vert \text{,}
\end{eqnarray*}%
for all $\lambda \in
\mathbb{R}
$ and $x,y,z\in X$.

$(\mathbf{NS3})$ Let $x,y,z,x^{\prime },y^{\prime },z^{\prime }\in X$. Then
we obtain%
\begin{eqnarray*}
\Vert x+x^{\prime },y+y^{\prime },z+z^{\prime }\Vert &=&\mid x+x^{\prime
}-2y-2y^{\prime }-2z-2z^{\prime }\mid \\
+ &\mid &y+y^{\prime }-2x-2x^{\prime }-2z-2z^{\prime }\mid \\
+ &\mid &z+z^{\prime }-2y-2y^{\prime }-2x-2x^{\prime }\mid \\
&\leq &\mid 2x+2z^{\prime }\mid +\mid x-2z^{\prime }\mid +\mid z^{\prime
}-2x\mid \\
+ &\mid &2y+2x^{\prime }\mid +\mid y-2x^{\prime }\mid +\mid x^{\prime
}-2y\mid \\
+ &\mid &2z+2y^{\prime }\mid +\mid z-2y^{\prime }\mid +\mid y^{\prime
}-2z\mid \\
&=&\Vert 0,x,z^{\prime }\Vert +\Vert 0,y,x^{\prime }\Vert +\Vert
0,z,y^{\prime }\Vert \text{.}
\end{eqnarray*}%
Consequently, the function $\Vert x,y,z\Vert =\mid x-2y-2z\mid +\mid
y-2x-2z\mid +\mid z-2y-2x\mid $ is an $S$-norm on $X$.

On the contrary, we assume that this $S$-norm is generated by a norm. Then
the following equation should be satisfied%
\begin{equation*}
\Vert x,y,z\Vert =\Vert x\Vert +\Vert y\Vert +\Vert z\Vert \text{,}
\end{equation*}%
for all $x,y,z\in X$.

If we consider $\Vert x,0,0\Vert $ and $\Vert x,x,0\Vert $ then we obtain%
\begin{equation*}
\begin{array}{c}
\Vert x,0,0\Vert =\Vert x\Vert =\mid x\mid +\mid 2x\mid +\mid 2x\mid =5\mid
x\mid , \\
\Vert x,x,0\Vert =2\parallel x\parallel =\mid x\mid +\mid x\mid +\mid 4x\mid
=6\mid x\mid%
\end{array}%
\end{equation*}%
and so $\Vert x\Vert =5\mid x\mid $ and $\Vert x\Vert =3\mid x\mid $, which
is a contradiction. Hence this $S$-norm is not generated by a norm.
\end{example}

Now we prove that every $S$-norm generate a norm.

\begin{proposition}
\label{prop3} Let $X$ be a nonempty set, $(X,\Vert .,.,.\Vert )$ be an $S$%
-normed space and a function $\Vert .\Vert :X\rightarrow
\mathbb{R}
$ be defined as follows:%
\begin{equation*}
\Vert x\Vert =\Vert 0,x,0\Vert +\Vert 0,0,x\Vert ,
\end{equation*}%
for all $x\in X$. Then the function $\Vert x\Vert =\Vert 0,x,0\Vert +\Vert
0,0,x\Vert $ is a norm on $X$ and $(X,\Vert .\Vert )$ is a normed space.
\end{proposition}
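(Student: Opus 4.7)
The plan is to verify each of the four norm axioms $(\mathbf{N1})$--$(\mathbf{N4})$ in turn, using the $S$-norm axioms $(\mathbf{NS1})$, $(\mathbf{NS2})$, $(\mathbf{NS3})$. The first three axioms fall out almost immediately. For $(\mathbf{N1})$ and $(\mathbf{N2})$: $\|x\|$ is a sum of two quantities that are nonnegative by $(\mathbf{NS1})$; if $\|x\|=0$, then $\|0,x,0\|=0$, so $(\mathbf{NS1})$ forces $x=0$, while $\|0\|=2\|0,0,0\|=0$ is clear. For $(\mathbf{N3})$: apply $(\mathbf{NS2})$ to the triple $(0,x,0)$ and then to $(0,0,x)$ scaled by $\lambda$, obtaining $\|0,\lambda x,0\|=|\lambda|\|0,x,0\|$ and $\|0,0,\lambda x\|=|\lambda|\|0,0,x\|$, which sum to $|\lambda|\|x\|$.

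The only step requiring some ingenuity is the triangle inequality $(\mathbf{N4})$. The plan is to write $\|x+y\|=\|0,x+y,0\|+\|0,0,x+y\|$ and bound each piece twice using $(\mathbf{NS3})$, with the roles of $x$ and $y$ swapped. For the first piece, the substitution $(x,y,z,x',y',z')=(0,x,0,0,y,0)$ yields
\[
\|0,x+y,0\|\leq\|0,0,0\|+\|0,x,0\|+\|0,0,y\|=\|0,x,0\|+\|0,0,y\|,
\]
while $(0,y,0,0,x,0)$ yields $\|0,x+y,0\|\leq\|0,y,0\|+\|0,0,x\|$. Adding these gives $2\|0,x+y,0\|\leq\|0,x,0\|+\|0,0,x\|+\|0,y,0\|+\|0,0,y\|$. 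An analogous pair of applications, with $x+y$ placed in the last slot (substitutions $(0,0,x,0,0,y)$ and $(0,0,y,0,0,x)$), yields the same upper bound for $2\|0,0,x+y\|$. Summing these two inequalities and dividing by $2$ produces $\|x+y\|\leq\|x\|+\|y\|$.

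The main obstacle is recognizing that a single application of $(\mathbf{NS3})$ does not suffice: it produces mixed terms of the form $\|0,x,0\|+\|0,0,y\|$ rather than the clean sum $\|x\|+\|y\|$. The symmetric pair of applications and averaging is what aligns the right-hand side with the defining sum $\|0,x,0\|+\|0,0,x\|+\|0,y,0\|+\|0,0,y\|$. An alternative route would first establish the symmetry $\|0,x,0\|=\|0,0,x\|$ using the same trick as in Lemma \ref{lem3}, after which a single application of $(\mathbf{NS3})$ finishes the job; but the symmetric-averaging approach avoids introducing an auxiliary lemma and is self-contained.
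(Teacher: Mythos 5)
Your proof is correct, and the verification of $(\mathbf{N1})$--$(\mathbf{N3})$ matches the paper's. The only divergence is in the triangle inequality. The paper is slightly more economical there: it rewrites $\Vert 0,0,x+y\Vert =\Vert 0,0,y+x\Vert$ and then applies $(\mathbf{NS3})$ exactly once to each summand, decomposing the middle slot of $\Vert 0,x+y,0\Vert$ with $x$ first and the last slot of $\Vert 0,0,y+x\Vert$ with $y$ first, so the two resulting mixed bounds $\Vert 0,x,0\Vert +\Vert 0,0,y\Vert$ and $\Vert 0,0,x\Vert +\Vert 0,y,0\Vert$ already sum to $\Vert x\Vert +\Vert y\Vert$ with no averaging. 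Your symmetric argument applies $(\mathbf{NS3})$ four times and averages; it reaches the same bound and is equally valid, at the cost of two extra applications but without having to spot the asymmetric pairing of decompositions. Both routes avoid the auxiliary symmetry $\Vert 0,x,0\Vert =\Vert 0,0,x\Vert$ (provable by the same device as Lemma \ref{lem3}) that you mention as a third option.
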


\begin{proof}
Using the conditions $(\mathbf{NS1})$ and $(\mathbf{NS2})$, it is clear that
we obtain the conditions $(\mathbf{N1})$, $(\mathbf{N2})$ and $(\mathbf{N3})$
are satisfied.

Now we show that the condition $(\mathbf{N4})$ is satisfied.

$(\mathbf{N4})$ Let $x,y\in X$. By the condition $(\mathbf{NS3})$, we have%
\begin{eqnarray*}
\Vert x+y\Vert &=&\Vert 0,x+y,0\Vert +\Vert 0,0,x+y\Vert \\
&=&\Vert 0,x+y,0\Vert +\Vert 0,0,y+x\Vert \\
&\leq &\Vert 0,0,0\Vert +\Vert 0,x,0\Vert +\Vert 0,0,y\Vert +\Vert
0,0,x\Vert +\Vert 0,0,0\Vert +\Vert 0,y,0\Vert \\
&=&\Vert x\Vert +\Vert y\Vert .
\end{eqnarray*}%
Consequently, the function $\Vert x\Vert =\Vert 0,x,0\Vert +\Vert 0,0,x\Vert
$ is a norm on $X$ and $(X,\Vert .\Vert )$ is a normed space.
\end{proof}

We call this norm as the norm generated by the $S$-norm $\Vert .,.,.\Vert $.

Let $X$ be a real vector space. New generalizations of normed spaces have
been studied in recent years. For example, Khan defined the notion of a $G$%
-norm and studied some topological concepts in $G$-normed spaces \cite{khan}%
. Now we recall the definition of a $G$-norm and give the relationship
between a $G$-norm and an $S$-norm.

\begin{definition}
\cite{khan} \label{def5} Let $X$ be a real vector space. A real valued
function $\Vert .,.,.\Vert :X\times X\times X\rightarrow
\mathbb{R}
$ is called a $G$-norm on $X$ if the following conditions hold$:$

$(\mathbf{NG1})$ $\Vert x,y,z\Vert \geq 0$ and $\Vert x,y,z\Vert =0$ if and
only if $x=y=z=0$.

$(\mathbf{NG2})$ $\Vert x,y,z\Vert $ is invariant under permutations of $%
x,y,z$.

$(\mathbf{NG3})$ $\Vert \lambda x,\lambda y,\lambda z\Vert =\mid \lambda
\mid \Vert x,y,z\Vert $ for all $\lambda \in
\mathbb{R}
$ and $x,y,z\in X$.

$(\mathbf{NG4})$ $\Vert x+x^{\prime },y+y^{\prime },z+z^{\prime }\Vert \leq
\Vert x,y,z\Vert +\Vert x^{\prime },y^{\prime },z^{\prime }\Vert $ for all $%
x,y,z,x^{\prime },y^{\prime },z^{\prime }\in X$.

$(\mathbf{NG5})$ $\Vert x,y,z\Vert \geq \Vert x+y,0,z\Vert $ for all $%
x,y,z\in X$.

The pair $(X,\Vert .,.,.\Vert )$ is called a $G$-normed space.
\end{definition}

\begin{proposition}
\label{prop5} Every $G$-normed space is an $S$-normed space.
\end{proposition}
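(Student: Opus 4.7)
The plan is to verify the three $S$-norm axioms in turn. Axioms $(\mathbf{NS1})$ and $(\mathbf{NS2})$ are immediate: $(\mathbf{NS1})$ coincides literally with $(\mathbf{NG1})$, and $(\mathbf{NS2})$ is $(\mathbf{NG3})$. The only nontrivial content is the triangle-type inequality $(\mathbf{NS3})$, which has to be extracted from the $G$-norm triangle inequality $(\mathbf{NG4})$ together with the permutation invariance $(\mathbf{NG2})$.

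The key observation I would rely on for $(\mathbf{NS3})$ is a coordinatewise decomposition
\[
(x+x',\, y+y',\, z+z') = (x,0,z') + (x',y,0) + (0,y',z),
\]
which one checks by summing slot by slot: the first slots give $x+x'+0$, the second $0+y+y'$, and the third $z'+0+z$. This is precisely the trick needed, because each of the three summand triples on the right is, after permutation of its entries, exactly one of the triples appearing on the right-hand side of $(\mathbf{NS3})$: $(x,0,z')$ is a transposition of $(0,x,z')$, $(x',y,0)$ is a transposition of $(0,y,x')$, and $(0,y',z)$ is a transposition of $(0,z,y')$.

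With this decomposition in hand, I would iterate $(\mathbf{NG4})$ twice (first grouping the first two summands, then adding the third) to obtain
\[
\|x+x',y+y',z+z'\| \leq \|x,0,z'\| + \|x',y,0\| + \|0,y',z\|,
\]
and then invoke $(\mathbf{NG2})$ on each of the three terms on the right to rewrite them as $\|0,x,z'\|$, $\|0,y,x'\|$, and $\|0,z,y'\|$ respectively. This yields $(\mathbf{NS3})$ and completes the proof.

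The main obstacle is really just locating the correct decomposition of $(x+x',y+y',z+z')$ into three triples, each containing one zero, whose slotwise arrangement is compatible with the prescribed right-hand side of $(\mathbf{NS3})$; once the decomposition above is spotted, the rest is a direct application of $(\mathbf{NG4})$ and $(\mathbf{NG2})$. It is worth noting that axiom $(\mathbf{NG5})$ plays no role in this argument, which reflects the fact that $(\mathbf{NS3})$ is strictly weaker than the full combination of $G$-norm axioms.
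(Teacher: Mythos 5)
Your proof is correct and follows essentially the same route as the paper: the paper also verifies $(\mathbf{NS1})$ and $(\mathbf{NS2})$ directly from $(\mathbf{NG1})$ and $(\mathbf{NG3})$, and establishes $(\mathbf{NS3})$ by splitting $(x+x',y+y',z+z')$ into the same three summands $(x,0,z')$, $(x',y,0)$, $(0,y',z)$ via two applications of $(\mathbf{NG4})$ and then permuting entries with $(\mathbf{NG2})$. The only difference is the order of grouping in the two applications of $(\mathbf{NG4})$, which is immaterial.
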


\begin{proof}
Using the conditions $(\mathbf{NG1})$ and $(\mathbf{NG3})$, we see that the
conditions $(\mathbf{NS1})$ and $(\mathbf{NS2})$ are satisfied. We only show
that the condition $(\mathbf{NS3})$ is satisfied.

$(\mathbf{NS3})$ Let $x,y,z,x^{\prime },y^{\prime },z^{\prime }\in X$. Using
the conditions $(\mathbf{NG2})$ and $(\mathbf{NG4})$, we obtain%
\begin{eqnarray*}
\Vert x+x^{\prime },y+y^{\prime },z+z^{\prime }\Vert &=&\Vert
(x+0)+x^{\prime },0+(y+y^{\prime }),z^{\prime }+z\Vert \\
&\leq &\Vert x+0,0,0+z^{\prime }\Vert +\Vert x^{\prime },y+y^{\prime },z\Vert
\\
&=&\Vert 0,0+x,0+z^{\prime }\Vert +\Vert x^{\prime },y+y^{\prime },z\Vert \\
&\leq &\Vert 0,0,0\Vert +\Vert 0,x,z^{\prime }\Vert +\Vert x^{\prime
},y,0\Vert +\Vert 0,y^{\prime },z\Vert \\
&=&\Vert 0,x,z^{\prime }\Vert +\Vert 0,y,x^{\prime }\Vert +\Vert
0,z,y^{\prime }\Vert \text{.}
\end{eqnarray*}%
Consequently, the condition $(\mathbf{NS3})$ is satisfied.
\end{proof}

The converse of Proposition \ref{prop5} can not be always true as we have
seen in the following example.

\begin{example}
\label{exm8} Let $X=%
\mathbb{R}
$ and the $S$-norm be defined as in Example \ref{exm6}. If we put $x=1$, $%
y=5 $ and $z=0$, the condition $(\mathbf{NG5})$ is not satisfied. Indeed, we
have%
\begin{equation*}
\Vert x,y,z\Vert =\mid x-2y-2z\mid +\mid y-2x-2z\mid +\mid z-2y-2x\mid =23
\end{equation*}%
and%
\begin{equation*}
\Vert x+y,0,z\Vert =\mid x+y-2z\mid +\mid 2x+2y+2z\mid +\mid z-2y-2x\mid =30.
\end{equation*}%
Hence this $S$-norm is not a $G$-norm on $%
\mathbb{R}
$.
\end{example}

Now we give the definitions of an open ball and a closed ball on\ an $S$%
-normed space.

\begin{definition}
\label{def3} Let $(X,\Vert .,.,.\Vert )$ be an $S$-normed space. For given $%
x_{0}$, $a_{1}$, $a_{2}\in X$ and $r>0$, the open ball $%
B_{a_{1}}^{a_{2}}(x_{0},r)$ and the closed ball $B_{a_{1}}^{a_{2}}[x_{0},r]$
are defined as follows:%
\begin{equation*}
B_{a_{1}}^{a_{2}}(x_{0},r)=\{y\in X:\Vert y-x_{0},y-a_{1},y-a_{2}\Vert <r\}
\end{equation*}%
and%
\begin{equation*}
B_{a_{1}}^{a_{2}}[x_{0},r]=\{y\in X:\Vert y-x_{0},y-a_{1},y-a_{2}\Vert \leq
r\}\text{.}
\end{equation*}
\end{definition}

\begin{example}
\label{exm4} Let us consider the $S$-normed space $(X,\Vert .,.,.\Vert )$
generated by the usual norm on $X$, where $X=%
\mathbb{R}
^{2}$ and
\begin{equation*}
\Vert x\Vert =\Vert (x_{1},x_{2})\Vert =\sqrt{x_{1}^{2}+x_{2}^{2}}\text{,}
\end{equation*}%
for all $x\in
\mathbb{R}
^{2}$. Then the open ball $B_{a_{1}}^{a_{2}}(x_{0},r)$ in $%
\mathbb{R}
^{2}$ is a $3$-ellipse given by%
\begin{equation*}
B_{a_{1}}^{a_{2}}(x_{0},r)=\{y\in
\mathbb{R}
^{2}:\Vert y-x_{0}\Vert +\Vert y-a_{1}\Vert +\Vert y-a_{2}\Vert <r\}\text{.}
\end{equation*}%
If we choose $y=(y_{1},y_{2})$, $x_{0}=(1,1)$, $a_{1}=(0,0)$, $a_{2}=(-1,-1)$
in $%
\mathbb{R}
^{2}$ and $r=5$. Then we obtain%
\begin{equation}
B_{a_{1}}^{a_{2}}(x_{0},r)=\{y\in
\mathbb{R}
^{2}:\sqrt{(y_{1}-1)^{2}+(y_{2}-1)^{2}}+\sqrt{y_{1}^{2}+y_{2}^{2}}+\sqrt{%
(y_{1}+1)^{2}+(y_{2}+1)^{2}}<5\}\text{,}  \label{eqn33}
\end{equation}%
as shown in Figure \ref{fig:1A}.
\end{example}

\begin{figure}[h]
\centering
\begin{subfigure}{.5\textwidth}
  \centering
  \includegraphics[width=.4\linewidth]{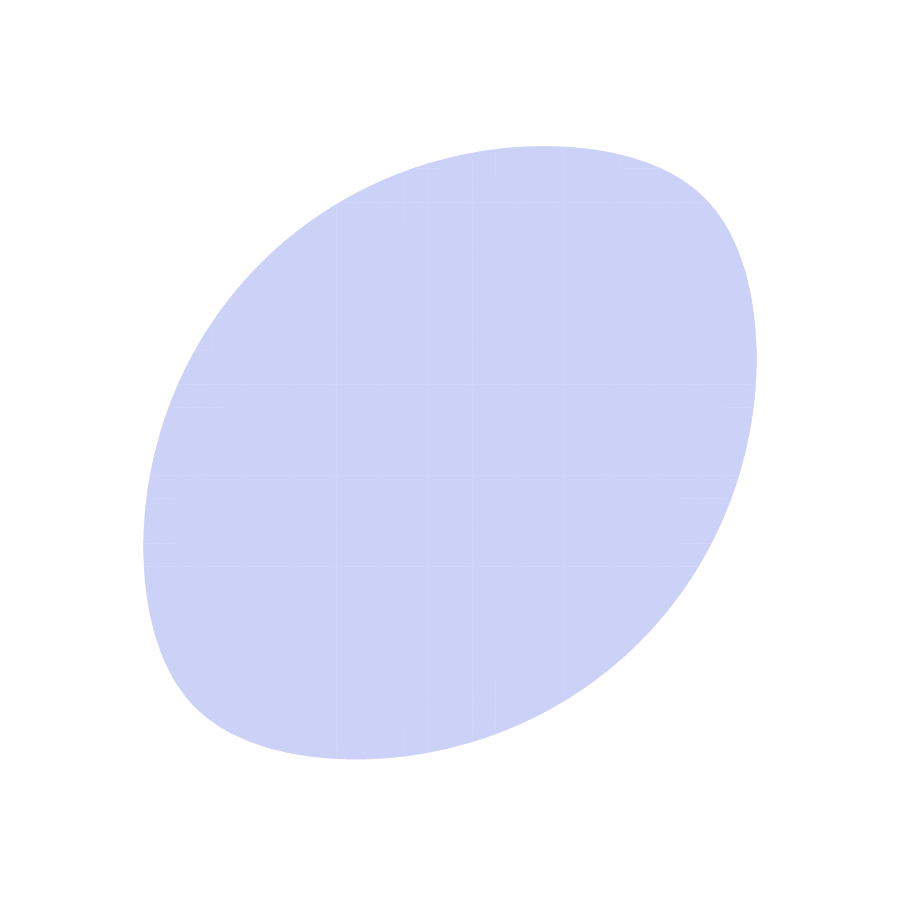}
  \caption{\Small The open ball which is corresponding\\ to the $S$-norm defined in (\ref{eqn33}).}
  \label{fig:1A}
\end{subfigure}%
\begin{subfigure}{.5\textwidth}
  \centering
  \includegraphics[width=.4\linewidth]{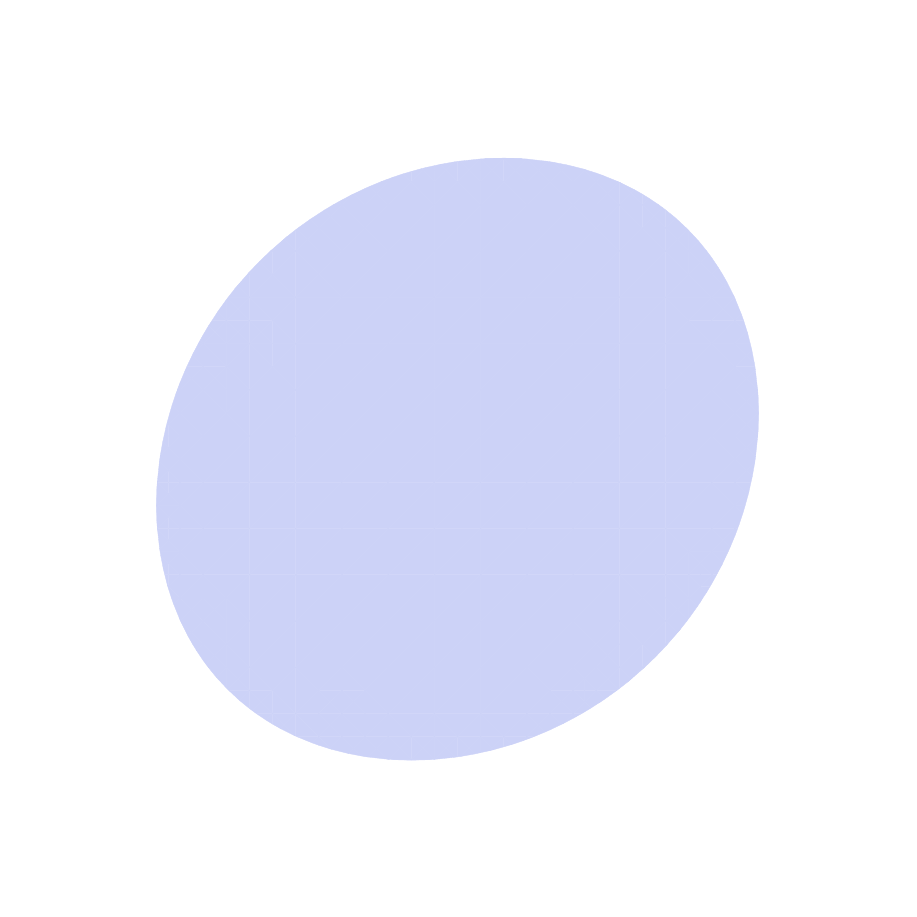}
  \caption{\Small  The open ball which is corresponding\\ to the $S$-norm defined in (\ref{eqn34}).}
  \label{fig:1B}
\end{subfigure}
\caption{\Small Some open balls in $(\mathbb{R}^{2},\Vert .,.,.\Vert)$}
\label{fig:1}
\end{figure}

Now we give the following example using the $S$-norm which is not generated
by a norm.

\begin{example}
\label{exm7} Let $X=%
\mathbb{R}
^{2}$ and the function $\Vert .,.,.\Vert :X\times X\times X\rightarrow
\mathbb{R}
$ be defined as in Example \ref{exm6}. Then we have%
\begin{eqnarray*}
\Vert x,y,z\Vert &=&\mid x-2y-2z\mid +\mid y-2x-2z\mid +\mid z-2y-2x\mid \\
&=&\sqrt{(x_{1}-2y_{1}-2z_{1})^{2}+(x_{2}-2y_{2}-2z_{2})^{2}} \\
&&+\sqrt{(y_{1}-2x_{1}-2z_{1})^{2}+(y_{2}-2x_{2}-2z_{2})^{2}} \\
&&+\sqrt{(z_{1}-2y_{1}-2x_{1})^{2}+(z_{2}-2y_{2}-2x_{2})^{2}},
\end{eqnarray*}%
for all $x=(x_{1},x_{2})$, $y=(y_{1},y_{2})$, $z=(z_{1},z_{2})\in
\mathbb{R}
^{2}$. Then $(%
\mathbb{R}
^{2},\Vert .,.,.\Vert )$ is an $S$-normed space. The open ball $%
B_{a_{1}}^{a_{2}}(x_{0},r)$ in $%
\mathbb{R}
^{2}$ is%
\begin{equation*}
B_{a_{1}}^{a_{2}}(x_{0},r)=\{y\in
\mathbb{R}
^{2}:\Vert y-x_{0},y-a_{1},y-a_{2}\Vert <r\}\text{.}
\end{equation*}%
If we choose $y=(y_{1},y_{2})$, $x_{0}=(1,1)$, $a_{1}=(0,0)$, $a_{2}=(-1,-1)$
in $%
\mathbb{R}
^{2}$ and $r=20$. Then we obtain%
\begin{equation}
\begin{array}{c}
B_{a_{1}}^{a_{2}}(x_{0},r)=\{y\in
\mathbb{R}
^{2}:\sqrt{(3y_{1}+3)^{2}+(3y_{2}+3)^{2}}+\sqrt{9y_{1}^{2}+9y_{2}^{2}} \\
+\sqrt{(3-3y_{1})^{2}+(3-3y_{2})^{2}}<20\}\text{,}%
\end{array}
\label{eqn34}
\end{equation}%
as shown in Figure \ref{fig:1B}.
\end{example}

\begin{definition}
\label{def4} Let $(X,\Vert .,.,.\Vert )$ be an $S$-normed space.

\begin{enumerate}
\item A sequence $\{x_{n}\}$ in $X$ converges to $x$ if and only if%
\begin{equation*}
\underset{n\rightarrow \infty }{\lim }\Vert 0,x_{n}-x,x-x_{n}\Vert =0.
\end{equation*}
That is, for each $\varepsilon >0$ there exists $n_{0}\in
\mathbb{N}
$ such that%
\begin{equation*}
\Vert 0,x_{n}-x,x-x_{n}\Vert <\varepsilon \text{,}
\end{equation*}%
for all $n\geq n_{0}$.

\item A sequence $\{x_{n}\}$ in $X$ is called a Cauchy sequence if
\begin{equation*}
\underset{n,m,l\rightarrow \infty }{\lim }\Vert
x_{n}-x_{m},x_{m}-x_{l},x_{l}-x_{n}\Vert =0\text{.}
\end{equation*}%
That is, for each $\varepsilon >0$ there exists $n_{0}\in
\mathbb{N}
$ such that%
\begin{equation*}
\Vert x_{n}-x_{m},x_{m}-x_{l},x_{l}-x_{n}\Vert <\varepsilon \text{,}
\end{equation*}%
for all $n,m,l\geq n_{0}$.

\item An $S$-normed space is called complete if each Cauchy sequence in $X$
converges in $X$.

\item A complete $S$-normed space is called an $S$-Banach space.
\end{enumerate}
\end{definition}

\begin{proposition}
\label{prop4} Every convergent sequence in an $S$-normed space is a Cauchy
sequence.
\end{proposition}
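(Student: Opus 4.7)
The plan is to prove the proposition by a single direct application of the triangle-type inequality $(\mathbf{NS3})$ after a ``$\pm p$'' decomposition in each coordinate. Suppose $\{x_{n}\}$ converges to $p$ in $(X,\Vert .,.,.\Vert )$; then by Definition \ref{def4}(1) one has $\Vert 0,x_{n}-p,p-x_{n}\Vert \to 0$ as $n\to \infty $. My goal is to bound $\Vert x_{n}-x_{m},x_{m}-x_{l},x_{l}-x_{n}\Vert $ by a finite sum of such ``convergence quantities'', after which letting $n,m,l\to \infty $ finishes the argument.

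The key step is the splitting
$$x_{n}-x_{m}=(x_{n}-p)+(p-x_{m}),\quad x_{m}-x_{l}=(x_{m}-p)+(p-x_{l}),\quad x_{l}-x_{n}=(x_{l}-p)+(p-x_{n}),$$
followed by an application of $(\mathbf{NS3})$ with the unprimed triple $(x_{n}-p,\,x_{m}-p,\,x_{l}-p)$ and the primed triple $(p-x_{m},\,p-x_{l},\,p-x_{n})$. The inequality that pops out is
$$\Vert x_{n}-x_{m},x_{m}-x_{l},x_{l}-x_{n}\Vert \leq \Vert 0,x_{n}-p,p-x_{n}\Vert +\Vert 0,x_{m}-p,p-x_{m}\Vert +\Vert 0,x_{l}-p,p-x_{l}\Vert ,$$
and each of the three summands on the right is literally a convergence quantity with index $n$, $m$, or $l$, and hence tends to $0$. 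This yields the Cauchy property of Definition \ref{def4}(2).

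I expect no serious obstacle; the only thing to verify with care is that the pairing of primed and unprimed entries on the right-hand side of $(\mathbf{NS3})$ lines up with the decomposition above, which one checks slot by slot. As an alternative route, I could instead invoke Proposition \ref{prop1} to pass to the generated $S$-metric $\mathcal{S}_{\Vert .\Vert }$ and apply $(\mathbf{S2})$ with witness point $a=p$; this produces the identical inequality written in $S$-metric notation, so it is really the same proof.
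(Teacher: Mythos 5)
Your proposal is correct and is essentially the paper's own proof: the same splitting $x_{n}-x_{m}=(x_{n}-p)+(p-x_{m})$ (and cyclically), the same application of $(\mathbf{NS3})$ with exactly that pairing of primed and unprimed slots, and the same resulting bound by the three convergence quantities. The slot-by-slot pairing does line up as you claim, so nothing further is needed.
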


\begin{proof}
Let the sequence $\{x_{n}\}$ in $X$ be convergent to $x$. For each $%
\varepsilon >0$, there exists $n_{0}\in
\mathbb{N}
$ such that%
\begin{equation*}
\Vert 0,x_{n}-x,x-x_{n}\Vert <\frac{\varepsilon }{3}\text{,}
\end{equation*}%
for all $n\geq n_{0}$. We now show that for each $\varepsilon >0$ there
exists $n_{0}\in
\mathbb{N}
$ such that%
\begin{equation*}
\Vert x_{n}-x_{m},x_{m}-x_{l},x_{l}-x_{n}\Vert <\varepsilon \text{,}
\end{equation*}%
for all $n,m,l\geq n_{0}$. Using the condition $(\mathbf{NS3})$, we obtain%
\begin{eqnarray*}
\Vert x_{n}-x_{m},x_{m}-x_{l},x_{l}-x_{n}\Vert &=&\Vert
x_{n}-x+x-x_{m},x_{m}-x+x-x_{l},x_{l}-x+x-x_{n}\Vert \\
&\leq &\Vert 0,x_{n}-x,x-x_{n}\Vert +\Vert 0,x_{m}-x,x-x_{m}\Vert \\
&&+\Vert 0,x_{l}-x,x-x_{l}\Vert \\
&<&\frac{\varepsilon }{3}+\frac{\varepsilon }{3}+\frac{\varepsilon }{3}%
=\varepsilon \text{.}
\end{eqnarray*}%
Consequently, the sequence $\{x_{n}\}$ in $X$ is a Cauchy sequence.
\end{proof}

The converse of Proposition \ref{prop4} can not be always true as we have
seen in the following example.

\begin{example}
\label{exm5} Let $X=(0,1)\subset
\mathbb{R}
$ and the function $\Vert .,.,.\Vert :X\times X\times X\rightarrow
\mathbb{R}
$ be an $S$-norm generated by the usual norm on $X$. If we consider the
sequence $\{x_{n}\}=\left\{ \dfrac{1}{n}\right\} $ on $X$, then this
sequence is a Cauchy sequence, but it is not a convergent sequence on $X$.

Now we show that the sequence is a Cauchy sequence. For $x_{n}$, $x_{m}$, $%
x_{l}\in X$, we obtain%
\begin{eqnarray*}
\underset{n,m,l\rightarrow \infty }{\lim }\Vert
x_{n}-x_{m},x_{m}-x_{l},x_{l}-x_{n}\Vert &=&\underset{n,m,l\rightarrow
\infty }{\lim }\left\Vert \frac{1}{n}-\frac{1}{m},\frac{1}{m}-\frac{1}{l},%
\frac{1}{l}-\frac{1}{n}\right\Vert \\
&=&\underset{n,m,l\rightarrow \infty }{\lim }\left( \left\vert \frac{1}{n}-%
\frac{1}{m}\right\vert +\left\vert \frac{1}{m}-\frac{1}{l}\right\vert
+\left\vert \frac{1}{l}-\frac{1}{n}\right\vert \right) =0\text{.}
\end{eqnarray*}%
The sequence is convergent to $0$ as follows:%
\begin{equation*}
\underset{n\rightarrow \infty }{\lim }\Vert 0,x_{n}-x,x-x_{n}\Vert =\underset%
{n\rightarrow \infty }{\lim }\left\Vert 0,\frac{1}{n}-0,0-\frac{1}{n}%
\right\Vert =0\text{,}
\end{equation*}%
for all $x_{n}\in X$. But $0\notin X$. Consequently, the sequence is not
convergent on $X$.
\end{example}

\section{A Fixed Point Theorem on $S$-Normed Spaces}

\label{sec:3} In this section, we introduce the Rhoades' condition on an $S$%
-normed space and denote it by $(\mathbf{NS25})$. We prove a fixed point
teorem using this contractive condition.

At first, we give some definitions and a proposition which are needed in the
sequel.

\begin{definition}
\label{def6} Let $(X,\Vert .,.,.\Vert )$ be an $S$-normed space and $%
E\subseteq X$. The closure of $E$, denoted by $\overline{E}$, is the set of
all $x\in X$ such that there exists a sequence $\{x_{n}\}$ in $E$ converging
to $x$. If $E=\overline{E}$, then $E$ is called a closed set.
\end{definition}

\begin{definition}
\label{def7} Let $(X,\Vert .,.,.\Vert )$ be an $S$-normed space and $%
A\subseteq X$. The subset $A$ is called bounded if there exists $r>0$ such
that%
\begin{equation*}
\Vert 0,x-y,y-x\Vert <r,
\end{equation*}%
for all $x,y\in A$.
\end{definition}

\begin{definition}
\label{def8} Let $(X,\Vert .,.,.\Vert )$ be an $S$-normed space and $%
A\subseteq X$. The $S$-diameter of $A$ is defined by%
\begin{equation*}
\delta ^{s}(A)=\sup \{\Vert 0,x-y,y-x\Vert :x,y\in A\}\text{.}
\end{equation*}%
If $A$ is bounded then we will write $\delta ^{s}(A)<\infty $.
\end{definition}

\begin{definition}
\label{def9} Let $X$ be an $S$-Banach space, $A\subseteq X$ and $u\in X$.

\begin{enumerate}
\item The $S$-radius of $A$ relative to a given $u\in X$ is defined by%
\begin{equation*}
r_{u}^{s}(A)=\sup \{\Vert 0,u-x,x-u\Vert :x\in A\}\text{.}
\end{equation*}

\item The $S$-Chebyshev radius of $A$ is defined by%
\begin{equation*}
r^{s}(A)=\inf \{r_{u}^{s}(A):u\in A\}\text{.}
\end{equation*}

\item The $S$-Chebyshev centre of $A$ is defined by%
\begin{equation*}
C^{s}(A)=\{u\in A:r_{u}^{s}(A)=r^{s}(A)\}\text{.}
\end{equation*}
\end{enumerate}
\end{definition}

By Definition \ref{def8} and Definition \ref{def9}, it can be easily seen
the following inequality:

\begin{equation*}
r^{s}(A)\leq r_{u}^{s}(A)\leq \delta ^{s}(A)\text{.}
\end{equation*}

\begin{definition}
\label{def10} A point $u\in A$ is called $S$-diametral if $%
r_{u}^{s}(A)=\delta ^{s}(A)$. If $r_{u}^{s}(A)<\delta ^{s}(A)$, then $u$ is
called non-$S$-diametral.
\end{definition}

\begin{definition}
\label{def11} A convex subset of an $S$-Banach space $X$ has $S$-normal
structure if every $S$-bounded and convex subset of $A$ having $\delta
^{s}(A)>0$ has at least one non-$S$-diametral point.
\end{definition}

\begin{proposition}
\label{prop6} If $X$ is a reflexive $S$-Banach space, $A$ is a nonempty,
closed and convex subset of $X$, then $C^{s}(A)$ is nonempty, closed and
convex.
\end{proposition}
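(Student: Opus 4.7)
The strategy is to follow the classical Chebyshev-centre argument for bounded closed convex sets in reflexive Banach spaces, transplanted to the $S$-norm framework. Introduce the real-valued function $\varphi : X \to [0,\infty]$ by $\varphi(u) = r_u^s(A) = \sup\{\|0, u-x, x-u\| : x \in A\}$, so that $C^s(A) = \{u \in A : \varphi(u) = r^s(A)\}$ is exactly the set where $\varphi$ attains its infimum over $A$. The proposition then reduces to two analytic features of $\varphi$: convexity and lower semi-continuity in the $S$-norm topology of Definition \ref{def4}.

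First I would establish these two features. For each fixed $x \in A$, the map $u \mapsto N_x(u) := \|0, u-x, x-u\|$ is nonnegative and satisfies $N_x(\lambda u - \lambda x + x) = |\lambda|N_x(u - x + x)$ by (NS2). From (NS3), by choosing the substitution $(x_1,x_1')=(0,0)$, $(y_1,y_1')=(u-x,v-x)$, $(z_1,z_1')=(-(u-x),-(v-x))$ and applying Lemma \ref{lem3}, one obtains a subadditivity estimate that rescales to the convexity inequality $N_x(\lambda u + (1-\lambda) v) \leq \lambda N_x(u) + (1-\lambda) N_x(v)$. Continuity of each $N_x$ in the topology of Definition \ref{def4} follows from the same identity together with Lemma \ref{lem3}. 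Taking the pointwise supremum over $x \in A$ preserves convexity and lower semi-continuity, so $\varphi$ is a convex, lower semi-continuous function on $X$.

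For nonemptiness, pick a minimizing sequence $\{u_n\} \subseteq A$ with $\varphi(u_n) \to r^s(A)$. Since $X$ is a reflexive $S$-Banach space (interpreted through the induced norm of Proposition \ref{prop3}) and $A$ is closed and convex, $A$ is weakly closed; boundedness of the minimizing sequence (inherited from the bound $r^s(A)\le \delta^s(A)$) yields a subsequence $u_{n_k} \rightharpoonup u \in A$. A convex lower semi-continuous function on a reflexive Banach space is weakly lower semi-continuous, hence $\varphi(u) \leq \liminf_k \varphi(u_{n_k}) = r^s(A)$, while $\varphi(u) \geq r^s(A)$ is immediate from the definition of $r^s(A)$. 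Therefore $u \in C^s(A)$.

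The closedness and convexity of $C^s(A)$ are now essentially automatic. For closedness, if $u_n \in C^s(A)$ converges to $u$ in the sense of Definition \ref{def4}, then $u \in A$ because $A$ is closed, and the lower semi-continuity of $\varphi$ yields $\varphi(u) \le r^s(A)$, forcing equality. For convexity, given $u_1, u_2 \in C^s(A)$ and $\lambda \in [0,1]$, convexity of $A$ puts $\lambda u_1 + (1-\lambda)u_2$ in $A$, and convexity of $\varphi$ gives $\varphi(\lambda u_1 + (1-\lambda)u_2) \leq \lambda r^s(A) + (1-\lambda) r^s(A) = r^s(A)$, while the reverse inequality is trivial. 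The main obstacle I expect is the first technical step: the asymmetric mixed-argument form of (NS3) is not standard, so verifying convexity (equivalently, subadditivity of $N_x$) requires a careful choice of substitutions and may be cleaner if one first compares $N_x$ with the induced norm from Proposition \ref{prop3} rather than attempting a direct triangle-inequality manipulation.
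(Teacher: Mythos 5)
The paper's own ``proof'' of Proposition \ref{prop6} is the single sentence ``It can be easily seen by definition of $C^{s}(A)$,'' so there is no argument to compare yours against; your proposal follows the natural classical route (minimize $\varphi(u)=r_{u}^{s}(A)$ over $A$, show $\varphi$ is convex and lower semi-continuous, invoke reflexivity), and parts of it do go through. In particular, writing $f(w)=\Vert 0,w,-w\Vert$, the inequality $(\mathbf{S2})$ for the generated $S$-metric gives $\bigl|f(p)-f(q)\bigr|\leq 2f(p-q)$, so each $N_{x}$ is continuous for the convergence of Definition \ref{def4}, $\varphi=\sup_{x\in A}N_{x}$ is lower semi-continuous, and closedness of $C^{s}(A)$ follows.

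The convexity step, however --- exactly the point the paper glosses over, and the one you yourself flag as the main obstacle --- is a genuine gap rather than a technicality. By $(\mathbf{NS2})$ and translation invariance, convexity of $N_{x}$ is equivalent to the subadditivity $f(p+q)\leq f(p)+f(q)$. Your proposed substitution into $(\mathbf{NS3})$, namely $x=x'=0$, $(y,y')=(p,q)$, $(z,z')=(-p,-q)$, yields $f(p+q)\leq \Vert 0,0,-q\Vert +\Vert 0,p,0\Vert +\Vert 0,-p,q\Vert$, and the last term is a genuinely mixed expression to which Lemma \ref{lem3} does not apply (it is not of the form $\Vert 0,a,-a\Vert$). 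The alternative pairing $(z,z')=(-q,-p)$ gives $f(p+q)\leq \Vert 0,0,p\Vert +\Vert 0,p,0\Vert +f(q)=\Vert p\Vert _{\mathrm{ind}}+f(q)$, where $\Vert \cdot \Vert _{\mathrm{ind}}$ is the norm of Proposition \ref{prop3}; but the comparison between $f$ and $\Vert \cdot \Vert _{\mathrm{ind}}$ only goes one way (one can show $f\leq \Vert \cdot \Vert _{\mathrm{ind}}$, and for the $S$-norm of Example \ref{exm6} one computes $f(p)=6|p|$ while $\Vert p\Vert _{\mathrm{ind}}=10|p|$), so this route does not reduce to $f(p)+f(q)$ either. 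What the axioms actually yield is only the quasi-subadditivity $f(p+q)\leq f(p)+2f(q)$, which is insufficient for convexity and even for quasi-convexity of $\varphi$ (all one would need for convexity of the sublevel set $C^{s}(A)$). Until subadditivity of $f$ is derived from $(\mathbf{NS1})$--$(\mathbf{NS3})$ or assumed, both the convexity of $C^{s}(A)$ and the weak lower semi-continuity used in your nonemptiness argument are unsupported. Two smaller points: the paper never defines reflexivity for $S$-Banach spaces, so reading it through Proposition \ref{prop3} requires the (unestablished) two-sided comparability of $f$ with $\Vert \cdot \Vert _{\mathrm{ind}}$; and your nonemptiness argument needs $A$ bounded (otherwise $\delta ^{s}(A)=\infty$ controls nothing), a hypothesis missing from the statement though present where the proposition is applied in Theorem \ref{thm1}.
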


\begin{proof}
It can be easily seen by definition of $C^{s}(A)$.
\end{proof}

Now we introduce the Rhoades' condition $(\mathbf{NS25})$ on an $S$-Banach
space.

\begin{definition}
\label{def12} Let $(X,\Vert .,.,.\Vert )$ be an $S$-Banach space and $T$ be
a self-mapping of $X$. We define%
\begin{equation*}
\begin{array}{r}
(\mathbf{NS25})\text{ \ \ }\Vert 0,Tx-Ty,Ty-Tx\Vert <\max \{\Vert
0,x-y,y-x\Vert ,\Vert 0,Tx-x,x-Tx\Vert , \\
\Vert 0,Ty-y,y-Ty\Vert ,\Vert 0,Ty-x,x-Ty\Vert ,\Vert 0,Tx-y,y-Tx\Vert \}%
\text{,}%
\end{array}%
\end{equation*}%
for each $x,y\in X$, $x\neq y$.
\end{definition}

\begin{lemma}
\cite{smulian} \label{lem1} Let $X$ be a Banach space. Then $X$ is reflexive
if and only if for any decreasing sequence $\{K_{n}\}$ of nonempty, bounded,
closed and convex subsets of $X$,%
\begin{equation*}
\bigcap\limits_{n=1}^{\infty }K_{n}\neq \emptyset \text{.}
\end{equation*}
\end{lemma}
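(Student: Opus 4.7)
The plan is to prove the two directions of this classical Šmulian characterization separately and to rely on standard tools (Kakutani's theorem, Goldstine's theorem, and the canonical embedding $\widehat{x} \in X^{**}$).

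For the forward direction (reflexivity implies the nested intersection property), I would invoke Kakutani's theorem: $X$ is reflexive if and only if its closed unit ball is weakly compact, equivalently every norm-bounded norm-closed convex subset of $X$ is weakly compact. Given a decreasing sequence $\{K_n\}$ of nonempty, bounded, closed, and convex subsets of $X$, each $K_n$ is weakly compact (norm-closed convex sets coincide with weakly closed convex sets by Mazur's theorem). Since $\{K_n\}$ is decreasing and every $K_n$ is nonempty, the family has the finite intersection property inside the weakly compact set $K_1$, so $\bigcap_{n=1}^{\infty} K_n \neq \emptyset$.

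For the backward direction, I would argue that an arbitrary $F \in X^{**}$ with $\|F\| \le 1$ lies in the canonical image $\widehat{X}$. After reducing (if necessary) to the case in which $X^*$ is separable, by replacing $X$ with a closed separable subspace associated with $F$ via Goldstine's theorem, I fix a countable dense sequence $\{f_n\} \subset X^*$ and define
\begin{equation*}
K_n = \Bigl\{ x \in B_X : |\langle f_k, x \rangle - F(f_k)| \le 1/n \text{ for every } 1 \le k \le n \Bigr\}.
\end{equation*}
Each $K_n$ is nonempty by Goldstine's theorem (since $\widehat{B_X}$ is weak-$*$ dense in $B_{X^{**}}$), bounded as a subset of $B_X$, closed as the intersection of finitely many closed half-spaces with $B_X$, and convex; moreover $K_{n+1} \subseteq K_n$. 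By the hypothesis of the lemma, there exists $x \in \bigcap_{n=1}^{\infty} K_n$. Then $\langle f_k, x \rangle = F(f_k)$ for every $k$, and by density of $\{f_n\}$ in $X^*$ together with continuity of $F$ and of evaluation at $x$, one gets $\widehat{x} = F$, showing that $X$ is reflexive.

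The main obstacle is the backward direction: the nested intersection hypothesis is a purely metric/order statement, and to extract an element of $X$ representing an arbitrary $F \in X^{**}$ requires a careful construction of the test sets $K_n$ using Goldstine's theorem, together with a reduction to the separable-dual case so that countably many functionals suffice to witness $\widehat{x} = F$. The forward direction is essentially a repackaging of the finite intersection property for weakly compact sets and is straightforward.
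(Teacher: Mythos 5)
The paper offers no proof of this lemma at all --- it is quoted from \v{S}mulian's 1939 paper as a known result --- so there is no internal argument to measure yours against; I will therefore assess the proposal on its own terms. Your forward direction is correct and standard: by Kakutani's theorem and Mazur's theorem every bounded, norm-closed, convex subset of a reflexive space is weakly compact, and a decreasing sequence of nonempty weakly closed sets inside the weakly compact set $K_{1}$ has the finite intersection property, hence nonempty total intersection.

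The backward direction, however, contains a genuine gap at the reduction step. An element $x$ with $f_{k}(x)=F(f_{k})$ for countably many $f_{k}$ satisfies $\widehat{x}=F$ only if the closed linear span of $\{f_{k}\}$ is all of $X^{*}$; a single countable family serving every $F\in X^{**}$ therefore exists precisely when $X^{*}$ is separable. The proposed reduction ``replace $X$ by a closed separable subspace associated with $F$'' cannot deliver this: passing to a closed subspace does preserve the nested-intersection hypothesis, but it need not make the dual separable, and in the key test case it provably cannot. The space $\ell^{1}$ is separable and non-reflexive, yet every infinite-dimensional closed subspace of $\ell^{1}$ contains an isomorphic copy of $\ell^{1}$ and hence has non-separable dual, so no subspace of $\ell^{1}$ of the kind you describe exists. (Goldstine's theorem also only furnishes a \emph{net} converging weak-$*$ to $F$, not a sequence, so it does not single out a separable subspace either.) As written, the $x\in\bigcap_{n}K_{n}$ you produce agrees with $F$ on countably many functionals and nothing more. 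The argument can be repaired: first reduce to $X$ separable (reflexivity is separably determined and the intersection property passes to closed subspaces); then, assuming $\delta=d(F,\widehat{X})>0$, choose for each $q$ in a countable dense subset $Q\subseteq X$ a functional $f_{q}\in B_{X^{*}}$ with $|F(f_{q})-f_{q}(q)|>\delta/2$. Since $x\mapsto\sup_{q}|F(f_{q})-f_{q}(x)|$ is $1$-Lipschitz, this countable family separates $F$ from every point of $\widehat{X}$, and running your $K_{n}$ construction with these $f_{q}$ yields the contradiction. Alternatively, one can follow \v{S}mulian's original route: from non-reflexivity extract sequences $(x_{n})\subset B_{X}$ and $(f_{n})\subset B_{X^{*}}$ with $f_{n}(x_{m})=\theta$ for $n\leq m$ and $f_{n}(x_{m})=0$ for $n>m$, and check that the decreasing sets $K_{n}=\overline{conv}\{x_{m}:m\geq n\}$ have empty intersection.
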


\begin{lemma}
\label{lem4} Let $X$ be an $S$-Banach space. Then $X$ is reflexive if and
only if for any decreasing sequence $\{K_{n}\}$ of nonempty, bounded, closed
and convex subsets of $X$,%
\begin{equation*}
\bigcap\limits_{n=1}^{\infty }K_{n}\neq \emptyset \text{.}
\end{equation*}
\end{lemma}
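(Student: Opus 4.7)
The plan is to transfer the classical Smulian characterization of reflexivity (Lemma \ref{lem1}) from the Banach setting to the $S$-Banach setting via the norm induced from the $S$-norm.

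First, I would invoke Proposition \ref{prop3} to equip $X$ with the norm $\|x\| = \|0,x,0\| + \|0,0,x\|$, so that $(X,\|.\|)$ is a normed vector space. The next step is to verify that convergence and Cauchyness with respect to the $S$-norm coincide with convergence and Cauchyness with respect to $\|.\|$. One direction is immediate from $(\mathbf{NS3})$: with the decomposition $(x,x',y,y',z,z')=(0,0,a,0,0,-a)$ one obtains $\|0,a,-a\| \leq \|0,0,-a\| + \|0,a,0\| + \|0,0,0\| = \|0,a,0\| + \|0,0,a\| = \|a\|$. The reverse comparison uses $(\mathbf{NS3})$ and Lemma \ref{lem3} to control each of $\|0,a,0\|$ and $\|0,0,a\|$ in terms of $\|0,a,-a\|$. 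Once the two topologies are identified in this way, $(X,\|.\|)$ is an honest Banach space in the classical sense exactly when $X$ is an $S$-Banach space.

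Next, I would check that boundedness (Definition \ref{def7}) and closedness (Definition \ref{def6}) in $(X,\|.,.,.\|)$ translate, under the topological equivalence just established, into classical boundedness and closedness in $(X,\|.\|)$; convexity is a purely algebraic condition and passes over verbatim. Consequently, every decreasing sequence $\{K_n\}$ of nonempty, bounded, closed and convex subsets of $X$ in the $S$-sense is precisely a decreasing sequence of nonempty, bounded, closed and convex subsets of the Banach space $(X,\|.\|)$. Applying Lemma \ref{lem1} to this sequence in $(X,\|.\|)$ then yields the desired equivalence between reflexivity of $X$ (interpreted via its induced Banach space structure) and $\bigcap_{n=1}^{\infty} K_n \neq \emptyset$.

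The main obstacle is the quantitative comparison between the quantity $\|0,a,-a\|$ which governs $S$-norm convergence and the induced norm $\|a\|=\|0,a,0\|+\|0,0,a\|$. The upper bound $\|0,a,-a\| \leq \|a\|$ is a one-line consequence of $(\mathbf{NS3})$, but the reverse inequality needs a more careful combinatorial use of $(\mathbf{NS3})$ together with Lemma \ref{lem3} and $(\mathbf{NS2})$; for instance, suitable decompositions give $\|0,a,0\| \leq \|0,0,a\| + \|0,a,-a\|$ and $\|0,0,a\| \leq \|0,a,0\| + \|0,a,-a\|$, from which one must extract a two-sided bound so that the two notions of bounded and closed sets agree. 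Once this extraction is carried out, the rest of the proof is a routine translation of Lemma \ref{lem1}.
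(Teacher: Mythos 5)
Your strategy --- transporting Lemma \ref{lem1} through the norm of Proposition \ref{prop3} --- founders at exactly the step you flag as delicate, and it is worth noting that the paper gives you no help here: its entire proof of Lemma \ref{lem4} is the sentence ``By the definition of reflexivity the proof follows easily,'' and reflexivity of an $S$-Banach space is never actually defined, so the statement is in effect being taken as a definition rather than proved. Your one-line bound $\Vert 0,a,-a\Vert \leq \Vert 0,a,0\Vert +\Vert 0,0,a\Vert$ is correct, but the reverse comparison is not merely harder to extract --- it is false in general, so the two topologies you need to identify need not coincide. Concretely, let $p$ and $q$ be two inequivalent norms on a vector space $X$ (say $p=\Vert \cdot \Vert _{1}$ and $q=\Vert \cdot \Vert _{\infty }$ on $C[0,1]$) and set
\begin{equation*}
\Vert x,y,z\Vert =\max \{p(x),p(y),p(z)\}+q(x+y+z)\text{.}
\end{equation*}
Conditions $(\mathbf{NS1})$ and $(\mathbf{NS2})$ are immediate, and $(\mathbf{NS3})$ holds because $q(x+x^{\prime }+y+y^{\prime }+z+z^{\prime })\leq q(x+z^{\prime })+q(y+x^{\prime })+q(z+y^{\prime })$, while each of $p(x+x^{\prime })$, $p(y+y^{\prime })$, $p(z+z^{\prime })$ is bounded by the sum of two of the three maxima on the right. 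For this $S$-norm one computes $\Vert 0,a,-a\Vert =p(a)$ but $\Vert 0,a,0\Vert +\Vert 0,0,a\Vert =2\left( p(a)+q(a)\right) $, so no inequality of the form $\Vert a\Vert \leq C\Vert 0,a,-a\Vert $ can hold; $S$-convergence, $S$-boundedness and $S$-closedness are then strictly weaker than their counterparts for the induced norm, and Lemma \ref{lem1} cannot be applied to the sets $K_{n}$ as you propose.

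A secondary point: even if some reverse bound did exist, the two inequalities you cite, $\Vert 0,a,0\Vert \leq \Vert 0,0,a\Vert +\Vert 0,a,-a\Vert$ and $\Vert 0,0,a\Vert \leq \Vert 0,a,0\Vert +\Vert 0,a,-a\Vert$, cannot be combined into one: adding them gives the vacuous $\Vert a\Vert \leq \Vert a\Vert +2\Vert 0,a,-a\Vert$, and subtracting them gives $\left\vert \Vert 0,a,0\Vert -\Vert 0,0,a\Vert \right\vert \leq \Vert 0,a,-a\Vert$, which carries no information because $(\mathbf{NS3})$ applied with $(x,x^{\prime },y,y^{\prime },z,z^{\prime })=(0,0,0,a,0,0)$ and $(0,0,0,0,a,0)$ already forces $\Vert 0,a,0\Vert =\Vert 0,0,a\Vert$. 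To make Lemma \ref{lem4} a theorem rather than a convention you would have to either define reflexivity of an $S$-Banach space directly by the finite-intersection property (which appears to be what the authors tacitly do), or add a hypothesis guaranteeing that $\Vert 0,\cdot ,-\cdot \Vert$ is equivalent to the induced norm of Proposition \ref{prop3}; under such an extra hypothesis your reduction to Lemma \ref{lem1} would go through as written.
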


\begin{proof}
By the definition of reflexivity the proof follows easily.
\end{proof}

Recall that the convex hull of a set $A$ is denoted by $conv(A)$ and any
member of this set $conv(A)$ has the form%
\begin{equation*}
\overset{n}{\sum\limits_{i=1}}\alpha _{i}x_{i}\text{,}
\end{equation*}%
where $x_{i}\in A_{i},\alpha _{i}\geq 0$ for all $i=1,...,n$ and $\overset{n}%
{\sum\limits_{i=1}}\alpha _{i}=1$.

Now, we give the following fixed point theorem.

\begin{theorem}
\label{thm1} Let $X$ be a reflexive $S$-Banach space and $A$ be a nonempty,
closed, bounded and convex subset of $X$, having $S$-normal structure. If $%
T:A\rightarrow A$ is a continuous self-mapping satisfying the condition $(%
\mathbf{NS25})$ then $T$ has a unique fixed point in $A$.
\end{theorem}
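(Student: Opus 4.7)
The plan is to adapt Kirk's classical fixed point theorem for nonexpansive maps on Banach spaces with normal structure to the present $S$-normed setting. I would organize the argument into three components: extract a minimal nonempty closed convex $T$-invariant subset $K_0\subseteq A$ using Zorn's lemma and Lemma~\ref{lem4}; use $S$-normal structure together with the contractive condition $(\mathbf{NS25})$ to force the $S$-diameter of $K_0$ to vanish; and deduce uniqueness directly from the strict inequality in $(\mathbf{NS25})$.

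For the first component, consider the family $\mathcal{F}$ of all nonempty closed convex subsets $K$ of $A$ with $T(K)\subseteq K$, partially ordered by reverse inclusion. Since $A\in\mathcal{F}$, the family is nonempty. Every totally ordered chain has a lower bound given by the intersection of its members: this intersection is closed, convex and $T$-invariant, and it is nonempty by reflexivity and Lemma~\ref{lem4} (after passing to a countable cofinal decreasing subsequence). Zorn's lemma then yields a minimal element $K_0\in\mathcal{F}$. Using continuity of $T$ and minimality, one checks that $\overline{conv(T(K_0))}$ is a nonempty closed convex $T$-invariant subset of $K_0$, and hence equals $K_0$.

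For the second component, suppose for contradiction that $\delta^s(K_0)>0$. By $S$-normal structure there exists a non-$S$-diametral point in $K_0$, so $r^s(K_0)<\delta^s(K_0)$. Proposition~\ref{prop6} guarantees that the $S$-Chebyshev centre $C^s(K_0)$ is a nonempty closed convex subset of $K_0$. The crux is to show that $T(C^s(K_0))\subseteq C^s(K_0)$: once this is in hand, $C^s(K_0)\in\mathcal{F}$ and minimality forces $C^s(K_0)=K_0$, yet every $u\in C^s(K_0)$ satisfies $r_u^s(K_0)=r^s(K_0)<\delta^s(K_0)$, contradicting the existence of two points in $K_0$ at $S$-distance exceeding $r^s(K_0)$. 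This $T$-invariance of the Chebyshev centre is the main obstacle I anticipate. For $u\in C^s(K_0)$ and arbitrary $y\in K_0$, one applies $(\mathbf{NS25})$ with $x=u$ to try to bound $\|0,Tu-Ty,Ty-Tu\|$ by $r^s(K_0)$; three of the five terms in the maximum, namely $\|0,u-y,y-u\|$, $\|0,Tu-u,u-Tu\|$, and $\|0,Ty-u,u-Ty\|$, are automatically at most $r^s(K_0)$ by definition of $C^s(K_0)$, but the remaining two terms $\|0,Ty-y,y-Ty\|$ and $\|0,Tu-y,y-Tu\|$ are only a priori bounded by $\delta^s(K_0)$. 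Overcoming this will require iterating the Rhoades inequality on these terms, or restricting $y$ to a still smaller invariant set where they are controlled; the identity $K_0=\overline{conv(T(K_0))}$ together with the strictness of $(\mathbf{NS25})$ should be the decisive tools.

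Uniqueness, in contrast, is immediate. If $u\neq v$ are two fixed points of $T$, then $(\mathbf{NS25})$ applied with $x=u$, $y=v$, together with Lemma~\ref{lem3} to identify the two ``crossed'' terms, collapses the maximum on the right to $\|0,u-v,v-u\|$, yielding
\[
\|0,u-v,v-u\|=\|0,Tu-Tv,Tv-Tu\|<\|0,u-v,v-u\|,
\]
a contradiction. Hence the fixed point is unique.
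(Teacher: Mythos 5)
Your first component (Zorn's lemma via Lemma~\ref{lem4} to get a minimal invariant set) and your uniqueness argument match the paper and are fine. The problem is the second component, which is the heart of the theorem, and which you explicitly leave unresolved. You propose to show $T(C^s(K_0))\subseteq C^s(K_0)$ in the style of Kirk's theorem, but you then correctly observe that two of the five terms in $(\mathbf{NS25})$, namely $\Vert 0,Ty-y,y-Ty\Vert$ and $\Vert 0,Tu-y,y-Tu\Vert$, are only bounded by $\delta^s(K_0)$, and you stop there with the remark that ``overcoming this will require iterating the Rhoades inequality \dots or restricting $y$ to a still smaller invariant set.'' That is precisely the gap: for a Rhoades-type map the Kirk-style invariance of the Chebyshev centre does not follow from the stated bounds, and no argument is supplied to close it. A proof sketch that identifies the main obstacle without resolving it is not a proof.

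The paper (following Oliveira's Banach-space argument) takes a structurally different route around exactly this obstacle: instead of trying to prove $C^s(F)$ is $T$-invariant, it builds the increasing sequence $F_1=C^s(F)$, $F_{m+1}=conv(F_m\cup TF_m)$, and proves by induction that $\delta^s(F_m)\le r^s(F)$ for all $m$. The induction step is where the real work lives: points of $F_{m+1}$ are written as convex combinations of points of $F_m\cup TF_m$, the condition $(\mathbf{NS25})$ is applied to each term $\Vert 0,Tx_i-Ty,Ty-Tx_i\Vert$, the index set is split according to which branch of the maximum is active, and the two uncontrolled terms are handled by unwinding the convex combinations all the way down to $F_1=C^s(F)$ (where everything is bounded by $r^s(F)$) and by deriving a contradiction from the coefficient identities when the $\Vert 0,y-Ty,Ty-y\Vert$ branch would dominate. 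Then $M=\overline{\bigcup_m F_m}$ is a nonempty closed convex $T$-invariant proper subset of $F$, contradicting minimality. If you want to complete your proposal, this iterated convex-combination bookkeeping is the missing content; the direct Chebyshev-centre invariance you aimed for is not available under $(\mathbf{NS25})$.
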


\begin{proof}
At first, we show that the existence of the fixed point. Let $\mathcal{A}$
be the family of every nonempty, closed and convex subsets of $A$. Also we
assume that if $F\in $ $\mathcal{A}$ then $TF\subseteq F$. The family $%
\mathcal{A}$ is nonempty since $A\in $ $\mathcal{A}$. We can partially order
$\mathcal{A}$ by set inclusion, that is, if $F_{1}\subseteq F_{2}$ then $%
F_{1}\leq F_{2}$.

In $\mathcal{A}$, if we define a decreasing net of subsets%
\begin{equation*}
S=\{F_{i}:F_{i}\in \text{$\mathcal{A}$},i\in I\}\text{,}
\end{equation*}%
then by reflexivity, this net $S$ has nonempty intersection. Because it is a
decreasing net of nonempty, closed, bounded and convex subsets of $X$. If we
put $F_{0}=\bigcap\limits_{i\in I}F_{i}$ we have that $F_{0}$ is in $%
\mathcal{A}$ and is a lower bound of $S$.

Using Zorn's Lemma, there is a minimal element, denoted by $F$, in $\mathcal{%
A}$ as $S$ is any arbitrary decreasing net in $\mathcal{A}$. We see that
this $F$ is a singleton.

Assume that $\delta ^{s}(F)\neq \emptyset $. Since $F$ is nonempty, closed
and convex, $C^{s}(F)$ is a nonempty, closed and convex subset of $F$. We
have that%
\begin{equation*}
r^{s}(F)<\delta ^{s}(F)\text{,}
\end{equation*}%
\begin{equation*}
\delta ^{s}(C^{s}(F))\leq r^{s}(F)<\delta ^{s}(F)
\end{equation*}%
and so $C^{s}(F)$ is a proper subset of $F$.

Let $(F_{m})_{m\in
\mathbb{N}
}$ be an increasing sequence of subsets of $F$, defined by%
\begin{equation*}
F_{1}=C^{s}(F)\text{ and }F_{m+1}=conv(F_{m}\cup TF_{m})\text{,}
\end{equation*}%
for all $m\in
\mathbb{N}
$. If we denote the $S$-diameters of these sets $F_{k}$ by $\delta
_{k}^{s}=\delta ^{s}(F_{k})$, we show that%
\begin{equation*}
\delta _{k}^{s}\leq r^{s}(F)\text{,}
\end{equation*}%
for all $k\in
\mathbb{N}
$.

Using the $(PMI)$, we obtain

\begin{enumerate}
\item For $k=1$,%
\begin{equation*}
\delta _{1}^{s}=\delta ^{s}(F_{1})=\delta ^{s}(C^{s}(F))\leq r^{s}(F)\text{.}
\end{equation*}

\item If $\delta _{k}^{s}\leq r^{s}(F)$ for every $k=1,...,m$ then $\delta
_{m+1}^{s}\leq r^{s}(F)$.
\end{enumerate}

We note that%
\begin{equation*}
\delta _{m+1}^{s}=\delta ^{s}(F_{m+1})=\delta ^{s}(conv(F_{m}\cup
TF_{m}))=\delta ^{s}(F_{m}\cup TF_{m})\text{.}
\end{equation*}%
By the definition of $S$-diameter, for any given $\varepsilon >0$ there are $%
x^{\prime }$ and $y^{\prime }$ in $F_{m}\cup T(F_{m})$ satisfying%
\begin{equation*}
\delta _{m+1}^{s}-\varepsilon <\Vert 0,x^{\prime }-y^{\prime },y^{\prime
}-x^{\prime }\Vert \leq \delta _{m+1}^{s}\text{.}
\end{equation*}%
We obtain the following three cases for $x^{\prime }$, $y^{\prime }$:

\begin{enumerate}
\item $x^{\prime }$, $y^{\prime }\in F_{m}$ or

\item $x^{\prime }\in F_{m}$ and $y^{\prime }\in TF_{m}$ or

\item $x^{\prime }$, $y^{\prime }\in TF_{m}$.
\end{enumerate}

Redefining $x^{\prime }$ and $y^{\prime }$ as follows:

\begin{enumerate}
\item $x^{\prime }=x$ and $y^{\prime }=y$ with $x,y\in F_{m}$,

\item $x^{\prime }=x$ and $y^{\prime }=Ty$ with $x,y\in F_{m}$,

\item $x^{\prime }=Tx$ and $y^{\prime }=Ty$ with $x,y\in F_{m}$.
\end{enumerate}

We show that in any case%
\begin{equation*}
\delta _{m+1}^{s}-\varepsilon <r^{s}(F)\text{.}
\end{equation*}

\textbf{Case 1}. By the definition of $\delta _{m}^{s}$ and the induction
hypothesis, we obtain%
\begin{equation}
\delta _{m+1}^{s}-\varepsilon <\Vert 0,x-y,y-x\Vert \leq \delta
_{m+1}^{s}\leq r^{s}(F)  \label{eqn1}
\end{equation}%
and so $\delta _{m+1}^{s}-\varepsilon <r^{s}(F)$.

\textbf{Case 2}. We obtain%
\begin{equation*}
\delta _{m+1}^{s}-\varepsilon <\Vert 0,x-Ty,Ty-x\Vert
\end{equation*}%
with $x$, $y\in F_{m}$. Then by the definition of $F_{m}$, we have $x,y\in
conv(F_{m-1}\cup TF_{m-1})$ and so there is a finite index set $I$ such that
$x=\sum\limits_{i\in I}\alpha _{i}x_{i}$, with $\sum\limits_{i\in I}\alpha
_{i}=1$, $\alpha _{i}\geq 0$ and $x_{i}\in F_{m-1}\cup TF_{m-1}$ for any $%
i\in I$. We can separate the set $I$ in two disjoint subsets, $I=I_{1}\cup
I_{2}$, such that if $i\in I_{1}$ then $x_{i}\in F_{m-1}$ and if $i\in I_{2}$
then $x_{i}\in TF_{m-1}$.

Now redefining $x_{i}$ as $x_{i}=Tx_{i}$ with $x_{i}\in F_{m-1}$, we obtain%
\begin{equation*}
x=\sum\limits_{i\in I_{1}}\alpha _{i}x_{i}+\sum\limits_{i\in I_{2}}\alpha
_{i}Tx_{i}\text{.}
\end{equation*}%
Substituting in $\Vert 0,x-Ty,Ty-x\Vert $, we get%
\begin{equation}
\Vert 0,x-Ty,Ty-x\Vert \leq \sum\limits_{i\in I_{1}}\alpha _{i}\Vert
0,x_{i}-Ty,Ty-x_{i}\Vert +\sum\limits_{i\in I_{2}}\alpha _{i}\Vert
0,Tx_{i}-Ty,Ty-Tx_{i}\Vert \text{.}  \label{eqn2}
\end{equation}%
Applying the condition $(\mathbf{NS25})$ to $\Vert
0,Tx_{i}-Ty,Ty-Tx_{i}\Vert $, we have%
\begin{equation}
\begin{array}{r}
\Vert 0,Tx_{i}-Ty,Ty-Tx_{i}\Vert <\max \{\Vert 0,x_{i}-y,y-x_{i}\Vert ,\Vert
0,x_{i}-Tx_{i},Tx_{i}-x_{i}\Vert , \\
\Vert 0,y-Ty,Ty-y\Vert ,\Vert 0,x_{i}-Ty,Ty-x_{i}\Vert ,\Vert
0,Tx_{i}-y,y-Tx_{i}\Vert \}\text{.}%
\end{array}
\label{eqn3}
\end{equation}%
As $x_{i}\in F_{m-1}$, $Tx_{i},y\in F_{m}$, we have%
\begin{equation*}
\begin{array}{l}
\Vert 0,x_{i}-y,y-x_{i}\Vert \leq r^{s}(F), \\
\Vert 0,x_{i}-Tx_{i},Tx_{i}-x_{i}\Vert \leq r^{s}(F), \\
\Vert 0,Tx_{i}-y,y-Tx_{i}\Vert \leq r^{s}(F)%
\end{array}%
\end{equation*}%
and replacing in (\ref{eqn3}), we obtain%
\begin{equation*}
\Vert 0,Tx_{i}-Ty,Ty-Tx_{i}\Vert <\max \{r^{s}(F),\Vert 0,y-Ty,Ty-y\Vert
,\Vert 0,x_{i}-Ty,Ty-x_{i}\Vert \}\text{.}
\end{equation*}%
Let us subdivide the index set $I_{2}$ in three disjoint subsets $%
I_{2}=I_{2}^{1}\cup I_{2}^{2}\cup I_{2}^{3}$ such that%
\begin{equation*}
\begin{array}{l}
I_{2}^{1}=\{i\in I_{2}:\Vert 0,Tx_{i}-Ty,Ty-Tx_{i}\Vert <r^{s}(F)\}, \\
I_{2}^{2}=\{i\in I_{2}:\Vert 0,Tx_{i}-Ty,Ty-Tx_{i}\Vert <\Vert
0,x_{i}-Ty,Ty-x_{i}\Vert \}, \\
I_{2}^{3}=\{i\in I_{2}:\Vert 0,Tx_{i}-Ty,Ty-Tx_{i}\Vert <\Vert
0,y-Ty,Ty-y\Vert \}.%
\end{array}%
\end{equation*}%
Then using (\ref{eqn2}), we have
\begin{equation}
\begin{array}{c}
\Vert 0,x-Ty,Ty-x\Vert \leq \sum\limits_{i\in I_{1}\cup I_{2}^{2}}\alpha
_{i}\Vert 0,x_{i}-Ty,Ty-x_{i}\Vert +\sum\limits_{i\in I_{2}^{1}}\alpha
_{i}r^{s}(F) \\
+\sum\limits_{i\in I_{2}^{3}}\alpha _{i}\Vert 0,y-Ty,Ty-y\Vert \text{.}%
\end{array}
\label{eqn4}
\end{equation}%
Redefining $I_{1}$, $I_{2}$ and $I_{3}$%
\begin{equation}
\overline{I_{1}}=I_{1}\cup I_{2}^{2}\text{, }\overline{I_{2}}=I_{2}^{1}\text{
and }\overline{I_{3}}=I_{2}^{3}\text{.}  \label{eqn5}
\end{equation}%
Then we have $I=\overline{I_{1}}\cup \overline{I_{2}}\cup \overline{I_{3}}$,
with $\overline{I_{j}}\cap \overline{I_{k}}=\emptyset $. If $j\neq k$ and $%
\sum\limits_{i\in I}\alpha _{i}=1$ then using (\ref{eqn4}), it becomes%
\begin{equation}
\begin{array}{c}
\Vert 0,x-Ty,Ty-x\Vert \leq \sum\limits_{i\in \overline{I_{1}}}\alpha
_{i}\Vert 0,x_{i}-Ty,Ty-x_{i}\Vert +\sum\limits_{i\in \overline{I_{2}}%
}\alpha _{i}r^{s}(F) \\
+\sum\limits_{i\in \overline{I_{3}}}\alpha _{i}\Vert 0,y-Ty,Ty-y\Vert \text{.%
}%
\end{array}
\label{eqn6}
\end{equation}%
If $A_{0}=\sum\limits_{i\in \overline{I_{2}}}\alpha _{i}$ and $%
B_{0}=\sum\limits_{i\in \overline{I_{3}}}\alpha _{i}$ with $%
\sum\limits_{i\in \overline{I_{1}}}\alpha _{i}+A_{0}+B_{0}=1$. Using (\ref%
{eqn6}), we have%
\begin{equation}
\Vert 0,x-Ty,Ty-x\Vert \leq \sum\limits_{i\in \overline{I_{1}}}\alpha
_{i}\Vert 0,x_{i}-Ty,Ty-x_{i}\Vert +A_{0}r^{s}(F)+B_{0}\Vert
0,y-Ty,Ty-y\Vert \text{.}  \label{eqn7}
\end{equation}%
For each $i\in \overline{I_{1}}$, $x_{i}\in F_{m-1}=conv(F_{m-2}\cup
TF_{m-2})$, there is a finite set $J_{i}$, such that
\begin{equation}
x_{i}=\sum\limits_{j\in J_{i}}\beta _{i}^{j}x_{i}^{j}\text{,}  \label{eqn8}
\end{equation}%
with $x_{i}^{j}\in F_{m-2}\cup TF_{m-2}$, $\beta _{i}^{j}\geq 0$ and $%
\sum\limits_{j\in J_{i}}\beta _{i}^{j}=1$ for any $j\in J_{i}$. Let $%
J_{i}=J_{i}^{1}\cup J_{i}^{2}$, with $J_{i}^{1}\cap J_{i}^{2}=\emptyset $
such that%
\begin{equation}
x_{i}=\sum\limits_{j\in J_{i}^{1}}\beta _{i}^{j}x_{i}^{j}+\sum\limits_{j\in
J_{i}^{2}}\beta _{i}^{j}Tx_{i}^{j}\text{.}  \label{eqn9}
\end{equation}%
For each $i\in \overline{I_{1}}$ we have%
\begin{equation}
\Vert 0,x_{i}-Ty,Ty-x_{i}\Vert \leq \sum\limits_{j\in J_{i}^{1}}\beta
_{i}^{j}\Vert 0,x_{i}^{j}-Ty,Ty-x_{i}^{j}\Vert +\sum\limits_{j\in
J_{i}^{2}}\beta _{i}^{j}\Vert 0,Tx_{i}^{j}-Ty,Ty-Tx_{i}^{j}\Vert \text{.}
\label{eqn10}
\end{equation}%
Applying the condition $(\mathbf{NS25})$ to $\Vert
0,Tx_{i}^{j}-Ty,Ty-Tx_{i}^{j}\Vert $, we have%
\begin{equation}
\begin{array}{r}
\Vert 0,Tx_{i}^{j}-Ty,Ty-Tx_{i}^{j}\Vert <\max \{\Vert
0,x_{i}^{j}-y,y-x_{i}^{j}\Vert ,\Vert
0,Tx_{i}^{j}-x_{i}^{j},x_{i}^{j}-Tx_{i}^{j}\Vert , \\
\Vert 0,y-Ty,Ty-y\Vert ,\Vert 0,Tx_{i}^{j}-y,y-Tx_{i}^{j}\Vert ,\Vert
0,x_{i}^{j}-Ty,Ty-x_{i}^{j}\Vert \}\text{.}%
\end{array}
\label{eqn11}
\end{equation}%
Since $x_{i}^{j}\in F_{m-2}$ and $y\in F_{m}$, we have%
\begin{equation*}
\begin{array}{l}
\Vert 0,x_{i}^{j}-y,y-x_{i}^{j}\Vert \leq r^{s}(F), \\
\Vert 0,Tx_{i}^{j}-x_{i}^{j},x_{i}^{j}-Tx_{i}^{j}\Vert \leq r^{s}(F), \\
\Vert 0,Tx_{i}^{j}-y,y-Tx_{i}^{j}\Vert \leq r^{s}(F).%
\end{array}%
\end{equation*}%
By (\ref{eqn11}), we obtain%
\begin{equation*}
\Vert 0,Tx_{i}^{j}-Ty,Ty-Tx_{i}^{j}\Vert <\max \{r^{s}(F),\Vert
0,y-Ty,Ty-y\Vert ,\Vert 0,x_{i}^{j}-Ty,Ty-x_{i}^{j}\Vert \}\text{.}
\end{equation*}%
Let $J_{i}^{2}=J_{i}^{2_{1}}\cup J_{i}^{2_{2}}\cup J_{i}^{2_{3}}$ with $%
J_{i}^{2_{k}}\cap J_{i}^{2_{p}}=\emptyset $ such that%
\begin{equation*}
\begin{array}{l}
J_{i}^{2_{1}}=\{j\in J_{i}^{2}:\Vert 0,Tx_{i}^{j}-Ty,Ty-Tx_{i}^{j}\Vert
<r^{s}(F)\}, \\
J_{i}^{2_{2}}=\{j\in J_{i}^{2}:\Vert 0,Tx_{i}^{j}-Ty,Ty-Tx_{i}^{j}\Vert
<\Vert 0,y-Ty,Ty-y\Vert \}, \\
J_{i}^{2_{3}}=\{j\in J_{i}^{2}:\Vert 0,Tx_{i}^{j}-Ty,Ty-Tx_{i}^{j}\Vert
<\Vert 0,x_{i}^{j}-Ty,Ty-x_{i}^{j}\Vert \}.%
\end{array}%
\end{equation*}%
Using (\ref{eqn10}), we obtain%
\begin{equation}
\begin{array}{c}
\Vert 0,x_{i}-Ty,Ty-x_{i}\Vert \leq \sum\limits_{j\in J_{i}^{1}\cup
J_{i}^{2_{3}}}\beta _{i}^{j}\Vert 0,x_{i}^{j}-Ty,Ty-x_{i}^{j}\Vert
+\sum\limits_{i\in J_{i}^{2_{1}}}\beta _{i}^{j}r^{s}(F) \\
+\sum\limits_{i\in J_{i}^{2_{2}}}\beta _{i}^{j}\Vert 0,y-Ty,Ty-y\Vert .%
\end{array}
\label{eqn12}
\end{equation}%
Let us denote by%
\begin{equation*}
J_{1}^{i}=J_{i}^{1}\cup J_{i}^{2_{3}}\text{, }J_{2}^{i}=J_{i}^{2_{2}}\text{
and }J_{3}^{i}=J_{i}^{2_{1}}\text{.}
\end{equation*}%
Then using (\ref{eqn12}), we have%
\begin{equation}
\begin{array}{c}
\Vert 0,x_{i}-Ty,Ty-x_{i}\Vert \leq \sum\limits_{j\in J_{1}^{i}}\beta
_{i}^{j}\Vert 0,x_{i}^{j}-Ty,Ty-x_{i}^{j}\Vert +\sum\limits_{i\in
J_{3}^{i}}\beta _{i}^{j}r^{s}(F) \\
+\sum\limits_{i\in J_{2}^{i}}\beta _{i}^{j}\Vert 0,y-Ty,Ty-y\Vert .%
\end{array}
\label{eqn13}
\end{equation}%
If $A_{i}=\sum\limits_{i\in J_{3}^{i}}\beta _{i}^{j}$ and $%
B_{i}=\sum\limits_{i\in J_{2}^{i}}\beta _{i}^{j}$ with $\sum\limits_{j\in
J_{1}^{i}}\beta _{i}^{j}+A_{i}+B_{i}=1$. Using (\ref{eqn13}), we obtain%
\begin{equation}
\Vert 0,x_{i}-Ty,Ty-x_{i}\Vert \leq \sum\limits_{j\in J_{1}^{i}}\beta
_{i}^{j}\Vert 0,x_{i}^{j}-Ty,Ty-x_{i}^{j}\Vert +A_{i}r^{s}(F)+B_{i}\Vert
0,y-Ty,Ty-y\Vert .  \label{eqn14}
\end{equation}%
Using (\ref{eqn7}) and $\Vert 0,x_{i}-Ty,Ty-x_{i}\Vert $ by (\ref{eqn14}),
we obtain%
\begin{equation*}
\begin{array}{c}
\Vert 0,x-Ty,Ty-x\Vert \leq \sum\limits_{i\in \overline{I_{1}}}\alpha
_{i}\sum\limits_{j\in J_{1}^{i}}\beta _{i}^{j}\Vert
0,x_{i}^{j}-Ty,Ty-x_{i}^{j}\Vert +\left[ \sum\limits_{i\in \overline{I_{1}}%
}\alpha _{i}A_{i}+A_{0}\right] r^{s}(F) \\
+\left[ \sum\limits_{i\in \overline{I_{1}}}\alpha _{i}B_{i}+B_{0}\right]
\Vert 0,y-Ty,Ty-y\Vert .%
\end{array}%
\end{equation*}%
Let $A_{1}=\sum\limits_{i\in \overline{I_{1}}}\alpha _{i}A_{i}$ and $%
B_{1}=\sum\limits_{i\in \overline{I_{1}}}\alpha _{i}B_{i}$. Then we have%
\begin{equation}
\begin{array}{c}
\Vert 0,x-Ty,Ty-x\Vert \leq \sum\limits_{i\in \overline{I_{1}}}\alpha
_{i}\sum\limits_{j\in J_{1}^{i}}\beta _{i}^{j}\Vert
0,x_{i}^{j}-Ty,Ty-x_{i}^{j}\Vert +(A_{1}+A_{0})r^{s}(F) \\
+(B_{1}+B_{0})\Vert 0,y-Ty,Ty-y\Vert .%
\end{array}
\label{eqn15}
\end{equation}%
We note that%
\begin{equation*}
\sum\limits_{i\in \overline{I_{1}}}\alpha _{i}\sum\limits_{j\in
J_{1}^{i}}\beta _{i}^{j}+\sum\limits_{i\in \overline{I_{1}}}\alpha
_{i}A_{i}+A_{0}+\sum\limits_{i\in \overline{I_{1}}}\alpha _{i}B_{i}+B_{0}=1.
\end{equation*}%
Let us take $K=\underset{i\in \overline{I_{1}}}{\bigcup }\left( \underset{%
j\in J_{1}^{i}}{\bigcup }j\right) $ and denote the scalars by $\xi _{k}$. To
each $k$ relative to the pair $(i,j)$, $x_{i}^{j}$ will be denoted by $x_{k}$%
.

Using (\ref{eqn15}), we obtain%
\begin{equation*}
\begin{array}{c}
\Vert 0,x-Ty,Ty-x\Vert \leq \sum\limits_{k\in K}\xi _{k}\Vert
0,x_{k}-Ty,Ty-x_{k}\Vert +(A_{1}+A_{0})r^{s}(F) \\
+(B_{1}+B_{0})\Vert 0,y-Ty,Ty-y\Vert ,%
\end{array}%
\end{equation*}%
where $\sum\limits_{k\in K}\xi _{k}+A_{1}+A_{0}+B_{1}+B_{0}=1$ and $x_{k}\in
F_{m-2}$.

Repeating this process which is done for $x_{k}$, we get%
\begin{equation}
\begin{array}{c}
\Vert 0,x-Ty,Ty-x\Vert \leq \sum\limits_{p\in P}\gamma _{p}\Vert
0,x_{p}-Ty,Ty-x_{p}\Vert +\sum\limits_{k=0}^{m-1}A_{k}r^{s}(F) \\
+\sum\limits_{k=0}^{m-1}B_{k}\Vert 0,y-Ty,Ty-y\Vert ,%
\end{array}
\label{eqn16}
\end{equation}%
where $\sum\limits_{p\in P}\gamma
_{p}+\sum\limits_{i=0}^{m-1}(A_{i}+B_{i})=1 $ and $x_{p}\in F_{1}=C^{s}(F)$.

Hence $\Vert 0,x_{p}-Ty,Ty-x_{p}\Vert \leq r^{s}(F)$ and using (\ref{eqn16}%
), we obtain%
\begin{equation}
\Vert 0,x-Ty,Ty-x\Vert \leq \sum\limits_{k=0}^{m-1}B_{k}\Vert
0,y-Ty,Ty-y\Vert +\left( \sum\limits_{p\in P}\gamma
_{p}+\sum\limits_{k=0}^{m-1}A_{k}\right) r^{s}(F).  \label{eqn17}
\end{equation}%
Let us turn to $\Vert 0,y-Ty,Ty-y\Vert $. Since $y\in conv(F_{m-1}\cup
TF_{m-1})$, we have $y=\sum\limits_{i\in I}\alpha _{i}y_{i}$ with $%
\sum\limits_{i\in I}\alpha _{i}=1$, $y_{i}\in F_{m-1}\cup TF_{m-1}$ and $%
\alpha _{i}\geq 0$ for all $i\in I$. Let $I=I_{1}\cup I_{2}$ such that $%
I_{1}\cap I_{2}=\emptyset $. If $i\in I_{1}$ then $y_{i}\in F_{m-1}$ and if $%
i\in I_{2}$ then $y_{i}\in TF_{m-1}$. Let $y_{i}=Ty_{i}$. Then we can write%
\begin{equation*}
y=\sum\limits_{i\in I_{1}}\alpha _{i}y_{i}+\sum\limits_{i\in I_{2}}\alpha
_{i}Ty_{i},
\end{equation*}%
with $y_{i}\in F_{m-1}$.

Substituting in $\Vert 0,y-Ty,Ty-y\Vert $ we get%
\begin{equation}
\Vert 0,y-Ty,Ty-y\Vert \leq \sum\limits_{i\in I_{1}}\alpha _{i}\Vert
0,y_{i}-Ty,Ty-y_{i}\Vert +\sum\limits_{i\in I_{2}}\alpha _{i}\Vert
0,Ty_{i}-Ty,Ty-Ty_{i}\Vert .  \label{eqn18}
\end{equation}%
Using the condition $(\mathbf{NS}25)$, we obtain%
\begin{equation}
\begin{array}{r}
\Vert 0,Ty_{i}-Ty,Ty-Ty_{i}\Vert <\max \{\Vert 0,y_{i}-y,y-y_{i}\Vert ,\Vert
0,y_{i}-Ty_{i},Ty_{i}-y_{i}\Vert , \\
\Vert 0,y-Ty,Ty-y\Vert ,\Vert 0,y_{i}-Ty,Ty-y_{i}\Vert ,\Vert
0,Ty_{i}-y,y-Ty_{i}\Vert \}\text{.}%
\end{array}
\label{eqn19}
\end{equation}%
Since $y_{i}\in F_{m-1}$, $y\in F_{m}$, we have%
\begin{equation*}
\begin{array}{l}
\Vert 0,y_{i}-y,y-y_{i}\Vert \leq r^{s}(F), \\
\Vert 0,y_{i}-Ty_{i},Ty_{i}-y_{i}\Vert \leq r^{s}(F), \\
\Vert 0,Ty_{i}-y,y-Ty_{i}\Vert \leq r^{s}(F).%
\end{array}%
\end{equation*}%
Using (\ref{eqn19}), we obtain%
\begin{equation*}
\Vert 0,Ty_{i}-Ty,Ty-Ty_{i}\Vert <\max \{r^{s}(F),\Vert 0,y-Ty,Ty-y\Vert
,\Vert 0,y_{i}-Ty,Ty-y_{i}\Vert \}\text{.}
\end{equation*}%
Redefining the index set $I_{2}=I_{2}^{1}\cup I_{2}^{2}\cup I_{2}^{3}$ with%
\begin{equation*}
\begin{array}{l}
I_{2}^{1}=\{i\in I_{2}:\Vert 0,Ty_{i}-Ty,Ty-Ty_{i}\Vert <r^{s}(F)\}, \\
I_{2}^{2}=\{i\in I_{2}:\Vert 0,Ty_{i}-Ty,Ty-Ty_{i}\Vert <\Vert
0,y-Ty,Ty-y\Vert \}, \\
I_{2}^{3}=\{i\in I_{2}:\Vert 0,Ty_{i}-Ty,Ty-Ty_{i}\Vert <\Vert
0,y_{i}-Ty,Ty-y_{i}\Vert \}.%
\end{array}%
\end{equation*}%
Now using (\ref{eqn18}), we get
\begin{equation}
\begin{array}{c}
\Vert 0,y-Ty,Ty-y\Vert \leq \sum\limits_{i\in I_{1}\cup I_{2}^{3}}\alpha
_{i}\Vert 0,y_{i}-Ty,Ty-y_{i}\Vert +\sum\limits_{i\in I_{2}^{1}}\alpha
_{i}r^{s}(F) \\
+\sum\limits_{i\in I_{2}^{2}}\alpha _{i}\Vert 0,y-Ty,Ty-y\Vert .%
\end{array}
\label{eqn20}
\end{equation}%
We note that if $\sum\limits_{i\in I_{2}^{2}}\alpha _{i}=1$ then%
\begin{equation*}
\Vert 0,y-Ty,Ty-y\Vert \leq \Vert 0,Ty_{i}-Ty,Ty-Ty_{i}\Vert <\Vert
0,y-Ty,Ty-y\Vert ,
\end{equation*}%
which is a contradiction.

Then $\sum\limits_{i\in I_{2}^{2}}\alpha _{i}<1$ and using (\ref{eqn20}), we
obtain%
\begin{equation}
\begin{array}{c}
\Vert 0,y-Ty,Ty-y\Vert \leq \sum\limits_{i\in I_{1}\cup I_{2}^{3}}\dfrac{%
\alpha _{i}}{1-\sum\limits_{i\in I_{2}^{2}}\alpha _{i}}\Vert
0,y_{i}-Ty,Ty-y_{i}\Vert \\
+\sum\limits_{i\in I_{2}^{1}}\dfrac{\alpha _{i}}{1-\sum\limits_{i\in
I_{2}^{2}}\alpha _{i}}r^{s}(F),%
\end{array}
\label{eqn21}
\end{equation}%
with $\sum\limits_{i\in I_{1}\cup I_{2}^{3}}\dfrac{\alpha _{i}}{%
1-\sum\limits_{i\in I_{2}^{2}}\alpha _{i}}+\sum\limits_{i\in I_{2}^{1}}%
\dfrac{\alpha _{i}}{1-\sum\limits_{i\in I_{2}^{2}}\alpha _{i}}=1$.

Let $I_{1}=I_{1}\cup I_{2}^{3}$, $I_{2}=I_{2}^{1}$ and $\beta _{i}=\dfrac{%
\alpha _{i}}{1-\sum\limits_{i\in I_{2}^{2}}\alpha _{i}}$. Using (\ref{eqn21}%
), we obtain%
\begin{equation}
\Vert 0,y-Ty,Ty-y\Vert \leq \sum\limits_{i\in I_{1}}\beta _{i}\Vert
0,y_{i}-Ty,Ty-y_{i}\Vert +\sum\limits_{i\in I_{2}}\beta _{i}r^{s}(F).
\label{eqn22}
\end{equation}%
If $A_{0}=\sum\limits_{i\in I_{2}}\beta _{i}$ then using (\ref{eqn22}), we
have%
\begin{equation}
\Vert 0,y-Ty,Ty-y\Vert \leq \sum\limits_{i\in I_{1}}\beta _{i}\Vert
0,y_{i}-Ty,Ty-y_{i}\Vert +A_{0}r^{s}(F),  \label{eqn23}
\end{equation}%
with $\sum\limits_{i\in I_{1}}\beta _{i}+A_{0}=1$ and $y_{i}\in
F_{m-1}=conv(F_{m-2}\cup TF_{m-2})$.

For each $i\in I_{1}$,%
\begin{equation*}
y_{i}=\sum\limits_{j\in J_{i}^{1}}\gamma _{i}^{j}y_{i}^{j}+\sum\limits_{j\in
J_{i}^{2}}\gamma _{i}^{j}Ty_{i}^{j},
\end{equation*}%
with $y_{i}^{j}\in F_{m-2}$ and $\sum\limits_{j\in J_{i}^{1}\cup
J_{i}^{2}}\gamma _{i}^{j}=1$. So we obtain%
\begin{equation}
\Vert 0,y_{i}-Ty,Ty-y_{i}\Vert \leq \sum\limits_{j\in J_{i}^{1}}\gamma
_{i}^{j}\Vert 0,y_{i}^{j}-Ty,Ty-y_{i}^{j}\Vert +\sum\limits_{j\in
J_{i}^{2}}\gamma _{i}^{j}\Vert 0,Ty_{i}^{j}-Ty,Ty-Ty_{i}^{j}\Vert .
\label{eqn24}
\end{equation}%
Using the condition $(\mathbf{NS25}),$ we have%
\begin{equation*}
\begin{array}{r}
\Vert 0,Ty_{i}^{j}-Ty,Ty-Ty_{i}^{j}\Vert <\max \{\Vert
0,y_{i}^{j}-y,y-y_{i}^{j}\Vert ,\Vert
0,Ty_{i}^{j}-y_{i}^{j},y_{i}^{j}-Ty_{i}^{j}\Vert , \\
\Vert 0,y-Ty,Ty-y\Vert ,\Vert 0,Ty_{i}^{j}-y,y-Ty_{i}^{j}\Vert ,\Vert
0,y_{i}^{j}-Ty,Ty-y_{i}^{j}\Vert \}\text{.}%
\end{array}%
\end{equation*}%
Since $y_{i}^{j}\in F_{m-2}$, $Ty_{i}^{j}\in F_{m-1}$ and $y\in F_{m}$, we
can write%
\begin{equation*}
\begin{array}{l}
\Vert 0,y_{i}^{j}-y,y-y_{i}^{j}\Vert \leq r^{s}(F), \\
\Vert 0,Ty_{i}^{j}-y_{i}^{j},y_{i}^{j}-Ty_{i}^{j}\Vert \leq r^{s}(F), \\
\Vert 0,Ty_{i}^{j}-y,y-Ty_{i}^{j}\Vert \leq r^{s}(F)%
\end{array}%
\end{equation*}%
and%
\begin{equation*}
\Vert 0,Ty_{i}^{j}-Ty,Ty-Ty_{i}^{j}\Vert <\max \{r^{s}(F),\Vert
0,y-Ty,Ty-y\Vert ,\Vert 0,y_{i}^{j}-Ty,Ty-y_{i}^{j}\Vert \}\text{.}
\end{equation*}%
Let $J_{i}^{2}$ be the union of the disjoint sets $J_{i}^{2}=J_{i}^{2_{1}}%
\cup J_{i}^{2_{2}}\cup J_{i}^{2_{3}}$ such that%
\begin{equation*}
\begin{array}{l}
J_{i}^{2_{1}}=\{j\in J_{i}^{2}:\Vert 0,Ty_{i}^{j}-Ty,Ty-Ty_{i}^{j}\Vert
<\Vert 0,y_{i}^{j}-Ty,Ty-y_{i}^{j}\Vert \}, \\
J_{i}^{2_{2}}=\{j\in J_{i}^{2}:\Vert 0,Ty_{i}^{j}-Ty,Ty-Ty_{i}^{j}\Vert
<r^{s}(F)\}, \\
J_{i}^{2_{3}}=\{j\in J_{i}^{2}:\Vert 0,Ty_{i}^{j}-Ty,Ty-Ty_{i}^{j}\Vert
<\Vert 0,y-Ty,Ty-y\Vert \}.%
\end{array}%
\end{equation*}%
Using (\ref{eqn24}), we obtain%
\begin{equation}
\begin{array}{c}
\Vert 0,y_{i}-Ty,Ty-y_{i}\Vert \leq \sum\limits_{j\in J_{i}^{1}\cup
J_{i}^{2_{1}}}\gamma _{i}^{j}\Vert 0,y_{i}^{j}-Ty,Ty-y_{i}^{j}\Vert
+\sum\limits_{j\in J_{i}^{2_{2}}}\gamma _{i}^{j}r^{s}(F) \\
+\sum\limits_{j\in J_{i}^{2_{3}}}\gamma _{i}^{j}\Vert 0,y-Ty,Ty-y\Vert .%
\end{array}
\label{eqn25}
\end{equation}%
Now redefine the index sets $J_{i}^{1}=J_{i}^{1}\cup J_{i}^{2_{1}}$, $%
J_{i}^{2}=J_{i}^{2_{2}},$ $J_{i}^{3}=J_{i}^{2_{3}}$ and using (\ref{eqn25})
we can write%
\begin{equation*}
\begin{array}{c}
\Vert 0,y_{i}-Ty,Ty-y_{i}\Vert \leq \sum\limits_{j\in J_{i}^{1}}\gamma
_{i}^{j}\Vert 0,y_{i}^{j}-Ty,Ty-y_{i}^{j}\Vert +\sum\limits_{j\in
J_{i}^{2}}\gamma _{i}^{j}r^{s}(F) \\
+\sum\limits_{j\in J_{i}^{3}}\gamma _{i}^{j}\Vert 0,y-Ty,Ty-y\Vert ,%
\end{array}%
\end{equation*}%
with $\sum\limits_{j\in J_{i}^{1}}\gamma _{i}^{j}+\sum\limits_{j\in
J_{i}^{2}}\gamma _{i}^{j}+\sum\limits_{j\in J_{i}^{3}}\gamma _{i}^{j}=1$.

Using the (\ref{eqn23}), we obtain%
\begin{equation}
\begin{array}{c}
\Vert 0,y-Ty,Ty-y\Vert \leq \sum\limits_{i\in I_{1}}\beta
_{i}\sum\limits_{j\in J_{i}^{1}}\gamma _{i}^{j}\Vert
0,y_{i}^{j}-Ty,Ty-y_{i}^{j}\Vert +\sum\limits_{i\in I_{1}}\beta
_{i}\sum\limits_{j\in J_{i}^{3}}\gamma _{i}^{j}\Vert 0,y-Ty,Ty-y\Vert \\
+\left[ \sum\limits_{i\in I_{1}}\beta _{i}\sum\limits_{j\in J_{i}^{2}}\gamma
_{i}^{j}+A_{0}\right] r^{s}(F).%
\end{array}
\label{eqn26}
\end{equation}%
If $\sum\limits_{i\in I_{1}}\beta _{i}\sum\limits_{j\in J_{i}^{3}}\gamma
_{i}^{j}=1$ we have%
\begin{equation*}
\Vert 0,y-Ty,Ty-y\Vert <\Vert 0,y-Ty,Ty-y\Vert ,
\end{equation*}%
which is a contradiction.

Hence $\sum\limits_{i\in I_{1}}\beta _{i}\sum\limits_{j\in J_{i}^{3}}\gamma
_{i}^{j}<1$ and using (\ref{eqn26}), we obtain%
\begin{equation}
\begin{array}{c}
\Vert 0,y-Ty,Ty-y\Vert \leq \dfrac{\sum\limits_{i\in I_{1}}\beta
_{i}\sum\limits_{j\in J_{i}^{1}}\gamma _{i}^{j}}{1-\sum\limits_{i\in
I_{1}}\beta _{i}\sum\limits_{j\in J_{i}^{3}}\gamma _{i}^{j}}\Vert
0,y_{i}^{j}-Ty,Ty-y_{i}^{j}\Vert \\
+\dfrac{\sum\limits_{i\in I_{1}}\beta _{i}\sum\limits_{j\in J_{i}^{2}}\gamma
_{i}^{j}+A_{0}}{1-\sum\limits_{i\in I_{1}}\beta _{i}\sum\limits_{j\in
J_{i}^{3}}\gamma _{i}^{j}}r^{s}(F),%
\end{array}
\label{eqn27}
\end{equation}%
with $\dfrac{\sum\limits_{i\in I_{1}}\beta _{i}\sum\limits_{j\in
J_{i}^{1}}\gamma _{i}^{j}}{1-\sum\limits_{i\in I_{1}}\beta
_{i}\sum\limits_{j\in J_{i}^{3}}\gamma _{i}^{j}}+\dfrac{\sum\limits_{i\in
I_{1}}\beta _{i}\sum\limits_{j\in J_{i}^{2}}\gamma _{i}^{j}+A_{0}}{%
1-\sum\limits_{i\in I_{1}}\beta _{i}\sum\limits_{j\in J_{i}^{3}}\gamma
_{i}^{j}}=1$.

Let $A_{1}=\dfrac{\sum\limits_{i\in I_{1}}\beta _{i}\sum\limits_{j\in
J_{i}^{2}}\gamma _{i}^{j}+A_{0}}{1-\sum\limits_{i\in I_{1}}\beta
_{i}\sum\limits_{j\in J_{i}^{3}}\gamma _{i}^{j}}$ and denote the index set
by $K=\underset{i\in I_{1}}{\bigcup }\left( \underset{j\in J_{i}^{1}}{%
\bigcup }j\right) $, write $\zeta _{k}$ for $k\in K$ relative to $(i,j)$,
that is%
\begin{equation*}
\zeta _{k}=\dfrac{\sum\limits_{i\in I_{1}}\beta _{i}\sum\limits_{j\in
J_{i}^{2}}\gamma _{i}^{j}}{1-\sum\limits_{i\in I_{1}}\beta
_{i}\sum\limits_{j\in J_{i}^{3}}\gamma _{i}^{j}}\text{.}
\end{equation*}%
Also we write $y_{k}$ for $y_{i}^{j}$. Then using (\ref{eqn27}), we obtain%
\begin{equation*}
\Vert 0,y-Ty,Ty-y\Vert \leq \sum\limits_{k\in K}\zeta _{k}\Vert
0,y_{k}-Ty,Ty-y_{k}\Vert +(A_{1}+A_{0})r^{s}(F),
\end{equation*}%
with $\sum\limits_{k\in K}\zeta _{k}+A_{1}+A_{0}=1$ and $y_{k}\in F_{m-2}$.

Repeating this process we get
\begin{equation*}
\Vert 0,y-Ty,Ty-y\Vert \leq \sum\limits_{p\in P}\lambda _{p}\Vert
0,y_{p}-Ty,Ty-y_{p}\Vert +\sum\limits_{k=0}^{m-1}A_{k}r^{s}(F),
\end{equation*}%
where $y_{p}\in F_{1}$ and $\sum\limits_{p\in P}\lambda
_{p}+\sum\limits_{k=0}^{m-1}A_{k}=1$.

Then $\Vert 0,y_{p}-Ty,Ty-y_{p}\Vert \leq r^{s}(F)$ and%
\begin{equation*}
\Vert 0,y-Ty,Ty-y\Vert \leq \left( \sum\limits_{p\in P}\lambda
_{p}+\sum\limits_{k=0}^{m-1}A_{k}\right) r^{s}(F)=r^{s}(F).
\end{equation*}%
Using (\ref{eqn17}), we get%
\begin{equation*}
\Vert 0,x-Ty,Ty-x\Vert \leq \left(
\sum\limits_{k=0}^{m-1}B_{k}+\sum\limits_{p\in P}\gamma
_{p}+\sum\limits_{k=0}^{m-1}A_{k}\right) r^{s}(F),
\end{equation*}%
with $\sum\limits_{k=0}^{m-1}B_{k}+\sum\limits_{p\in P}\gamma
_{p}+\sum\limits_{k=0}^{m-1}A_{k}=1$.

Consequently, we obtain $\Vert 0,x-Ty,Ty-x\Vert \leq r^{s}(F)$ and so%
\begin{equation*}
\delta _{m+1}^{s}-\varepsilon <\Vert 0,x-Ty,Ty-x\Vert \leq r^{s}(F).
\end{equation*}

\textbf{Case 3}. For $x,y\in F_{m}$, we have%
\begin{equation}
\begin{array}{c}
\delta _{m+1}^{s}-\varepsilon <\Vert 0,Tx-Ty,Ty-Tx\Vert <\max \{\Vert
0,x-y,y-x\Vert ,\Vert 0,Tx-x,x-Tx\Vert , \\
\Vert 0,y-Ty,Ty-y\Vert ,\Vert 0,x-Ty,Ty-x\Vert ,\Vert 0,Tx-y,y-Tx\Vert \}%
\end{array}
\label{eqn28}
\end{equation}%
and repeating what has been done in \textbf{Case 2}, we get%
\begin{equation*}
\delta _{m+1}^{s}-\varepsilon <\Vert 0,Tx-Ty,Ty-Tx\Vert \leq r^{s}(F).
\end{equation*}%
In all three cases we have $\delta _{m+1}^{s}-\varepsilon <r^{s}(F)$. If $%
\varepsilon $ tends to $0$ we get $\delta _{m+1}^{s}\leq r^{s}(F)$.

Let $F^{\infty }=\bigcup\limits_{n\in
\mathbb{N}
}F_{n}$. Then $F^{\infty }$ is nonempty because $C^{s}(F)\neq \emptyset $.
Since $F_{k}\subseteq F_{k+1}$, we obtain%
\begin{equation*}
\delta ^{s}(F^{\infty })=\lim_{k\rightarrow \infty }\delta ^{s}(F_{k})\leq
r^{s}(F).
\end{equation*}%
As $F_{k}\subset F$, $F^{\infty }\subseteq F$ and so $\delta ^{s}(F^{\infty
})\leq r^{s}(F)$.

Using the $S$-normal structure of $F$ we have $r^{s}(F)<\delta ^{s}(F)$ and $%
\delta ^{s}(F^{\infty })<\delta ^{s}(F)$. So $F^{\infty }$ must be a proper
subset of $F$. We obtain that $F^{\infty }$ is convex and $TF^{\infty
}\subseteq F^{\infty }$.

Let $M=\overline{convF^{\infty }}=\overline{F^{\infty }}$, its diameter is
the same as $F^{\infty }$. So we have%
\begin{equation*}
\delta ^{s}(M)\leq r^{s}(F)<\delta ^{s}(F)
\end{equation*}%
and $M$ is closed, nonempty and convex proper subset of $F$. Since $T$ is
continuous then $M$ is $T$-invariant and%
\begin{equation*}
TM=TF^{\infty }\subseteq \overline{TF^{\infty }}\subseteq \overline{%
F^{\infty }}=M.
\end{equation*}%
So $M\in \mathcal{A}$ and $M\subsetneqq F$ contradicting the minimality of $%
F $. Hence, it should be $\delta ^{s}(F)=0$. Consequently, $F$ has a unique
fixed point under $T$.
\end{proof}

\section{Some Comparisons on $S$-Normed Spaces}

\label{sec:4} In \cite{nihal}, the present authors defined Rhoades'
condition $(\mathbf{S25})$ using the notion of an $S$-metric. Also, they
investigated relationships between the conditions $(\mathbf{S25})$ and $(%
\mathbf{R25})$ in \cite{nihal2}.

In this section, we determine the relationships between the conditions $(%
\mathbf{S25})$ (resp. $(\mathbf{NR25})$) and $(\mathbf{NS25})$.

At first, we recall the Rhoades' condition on normed spaces as follows \cite%
{paula}:

Let $(X,\Vert .\Vert )$ be a Banach space and $T$ be a self-mapping of $X$.%
\begin{equation*}
(\mathbf{NR25})\text{ \ \ \ \ }\Vert Tx-Ty\Vert <\max \{\Vert x-y\Vert
,\Vert x-Tx\Vert ,\Vert y-Ty\Vert ,\Vert x-Ty\Vert ,\Vert y-Tx\Vert \},
\end{equation*}%
for each $x,y\in X$, $x\neq y$.

Now we give the relationship between $(\mathbf{S25})$ and $(\mathbf{NS25})$
in the following proposition.

\begin{proposition}
\label{prop7} Let $(X,\Vert .,.,.\Vert )$ be an $S$-Banach space, $%
(X,S_{\Vert .\Vert })$ be the $S$-metric space obtained by the $S$-metric
generated by $\Vert .,.,.\Vert $ and $T$ be a self-mapping of $X$. If $T$
satisfies the condition $(\mathbf{NS25})$ then $T$ satisfies the condition $(%
\mathbf{S25})$.
\end{proposition}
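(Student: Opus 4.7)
The plan is essentially a direct computation, since the condition $(\mathbf{NS25})$ has been deliberately written in a form that mirrors $(\mathbf{S25})$ once one passes to the generated $S$-metric. By Proposition \ref{prop1}, the $S$-metric generated by $\|.,.,.\|$ is
\[
S_{\|.\|}(x,y,z)=\|x-y,y-z,z-x\|,
\]
so the first step is to specialize this formula to each of the six triples that appear in the statement of $(\mathbf{S25})$.

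First I would compute
\[
S_{\|.\|}(Tx,Tx,Ty)=\|Tx-Tx,Tx-Ty,Ty-Tx\|=\|0,Tx-Ty,Ty-Tx\|,
\]
which matches exactly the left-hand side of $(\mathbf{NS25})$. Then I would run the same substitution on the five terms on the right: $S_{\|.\|}(x,x,y)=\|0,x-y,y-x\|$, $S_{\|.\|}(Tx,Tx,x)=\|0,Tx-x,x-Tx\|$, $S_{\|.\|}(Ty,Ty,y)=\|0,Ty-y,y-Ty\|$, $S_{\|.\|}(Ty,Ty,x)=\|0,Ty-x,x-Ty\|$, and $S_{\|.\|}(Tx,Tx,y)=\|0,Tx-y,y-Tx\|$. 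Each identity follows from the definition of $S_{\|.\|}$ with no further hypothesis.

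Once these six equalities are recorded, I would pick any $x,y\in X$ with $x\neq y$ and invoke the assumed inequality $(\mathbf{NS25})$ for this pair. Substituting the six identities above into that inequality turns it term by term into
\[
S_{\|.\|}(Tx,Tx,Ty)<\max\{S_{\|.\|}(x,x,y),S_{\|.\|}(Tx,Tx,x),S_{\|.\|}(Ty,Ty,y),S_{\|.\|}(Ty,Ty,x),S_{\|.\|}(Tx,Tx,y)\},
\]
which is precisely $(\mathbf{S25})$ for $T$ on the $S$-metric space $(X,S_{\|.\|})$.

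There is no real obstacle: the proof is a bookkeeping exercise in unfolding the definition of the generated $S$-metric and reading off the pattern. The only thing I would be mildly careful about is making sure that the pairing of arguments in $(\mathbf{NS25})$ (which uses triples of the form $\|0,u-v,v-u\|$) exactly matches the convention $S_{\|.\|}(a,a,b)=\|0,a-b,b-a\|$ for each of the five terms on the right, so that nothing needs Lemma \ref{lem3} to reorder entries; a quick check confirms that the matching is literal, so the implication follows immediately.
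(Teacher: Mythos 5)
Your proposal is correct and is essentially identical to the paper's own proof: both simply unfold $S_{\Vert .\Vert }(a,a,b)=\Vert 0,a-b,b-a\Vert$ for each of the six terms and then apply $(\mathbf{NS25})$ directly. Your observation that the pairing of arguments matches literally (so Lemma \ref{lem3} is not needed) is accurate and matches what the paper does.
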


\begin{proof}
Assume that $T$ satisfies the condition $(\mathbf{NS25})$. Using the
condition $(\mathbf{NS25})$, we have%
\begin{equation*}
\begin{array}{l}
S_{\Vert .\Vert }(Tx,Tx,Ty)=\Vert Tx-Tx,Tx-Ty,Ty-Tx\Vert =\Vert
0,Tx-Ty,Ty-Tx\Vert \\
<\max \{\Vert 0,x-y,y-x\Vert ,\Vert 0,Tx-x,x-Tx\Vert ,\Vert 0,Ty-y,y-Ty\Vert
, \\
\Vert 0,Ty-x,x-Ty\Vert ,\Vert 0,Tx-y,y-Tx\Vert \} \\
=\max \{S_{\Vert .\Vert }(x,x,y),S_{\Vert .\Vert }(Tx,Tx,x),S_{\Vert .\Vert
}(Ty,Ty,y),S_{\Vert .\Vert }(Ty,Ty,x),S_{\Vert .\Vert }(Tx,Tx,y)\}%
\end{array}%
\end{equation*}%
and so the condition $(\mathbf{S25})$ is satisfied by $T$ on $(X,S_{\Vert
.\Vert })$.
\end{proof}

Now, we give the relationship between the conditions $(\mathbf{NR25})$ and $(%
\mathbf{NS25})$ in the following proposition.

\begin{proposition}
\label{prop8} Let $(X,\Vert .\Vert )$ be a Banach space, $(X,\Vert
.,.,.\Vert )$ be an $S$-normed space obtained by the $S$-norm generated by $%
\Vert .\Vert $ and $T$ be a self-mapping of $X$. If $T$ satisfies the
condition $(\mathbf{NR25})$ then $T$ satisfies the condition $(\mathbf{NS25}%
) $.
\end{proposition}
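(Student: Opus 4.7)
The plan is to exploit the specific form of the $S$-norm generated by a norm, namely $\Vert x,y,z\Vert = \Vert x\Vert + \Vert y\Vert + \Vert z\Vert$, to reduce every term appearing in the condition $(\mathbf{NS25})$ to a scalar multiple of the corresponding norm expression in $(\mathbf{NR25})$. The key observation is that for any $u\in X$ one has
\begin{equation*}
\Vert 0,u,-u\Vert = \Vert 0\Vert + \Vert u\Vert + \Vert -u\Vert = 2\Vert u\Vert,
\end{equation*}
so each of the six terms occurring in $(\mathbf{NS25})$ (the left-hand side and the five entries of the max on the right-hand side) is of this shape with $u$ equal to one of $Tx-Ty$, $x-y$, $Tx-x$, $Ty-y$, $Ty-x$, $Tx-y$ respectively.

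The execution is then routine. First I would substitute the generated $S$-norm into the left-hand side of $(\mathbf{NS25})$ to obtain $\Vert 0,Tx-Ty,Ty-Tx\Vert = 2\Vert Tx-Ty\Vert$. Next I would do the same for each of the five terms under the max on the right-hand side, yielding $2\Vert x-y\Vert$, $2\Vert Tx-x\Vert = 2\Vert x-Tx\Vert$, $2\Vert Ty-y\Vert = 2\Vert y-Ty\Vert$, $2\Vert Ty-x\Vert = 2\Vert x-Ty\Vert$, and $2\Vert Tx-y\Vert = 2\Vert y-Tx\Vert$. Since the factor $2$ is common and positive, it can be pulled out of the maximum, and the inequality $(\mathbf{NS25})$ is equivalent to
\begin{equation*}
\Vert Tx-Ty\Vert < \max\{\Vert x-y\Vert,\Vert x-Tx\Vert,\Vert y-Ty\Vert,\Vert x-Ty\Vert,\Vert y-Tx\Vert\},
\end{equation*}
which is exactly the hypothesis $(\mathbf{NR25})$. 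This applies for each $x,y\in X$ with $x\neq y$, completing the verification.

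There is no genuine obstacle here; the proposition is essentially a translation identity expressing that the generated $S$-norm of a triple of the form $(0,u,-u)$ is a fixed multiple of $\Vert u\Vert$. The only point that needs a brief remark is the symmetry $\Vert -u\Vert = \Vert u\Vert$ used to identify $\Vert 0,Ty-x,x-Ty\Vert$ with $2\Vert x-Ty\Vert$ and $\Vert 0,Tx-y,y-Tx\Vert$ with $2\Vert y-Tx\Vert$, which is immediate from $(\mathbf{N3})$ with $\lambda = -1$.
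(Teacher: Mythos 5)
Your proposal is correct and follows essentially the same route as the paper: both compute that the generated $S$-norm gives $\Vert 0,u,-u\Vert = 2\Vert u\Vert$ for each relevant difference $u$, apply $(\mathbf{NR25})$, and pull the common factor $2$ through the maximum. The paper writes the factor $2$ as a sum $\Vert u\Vert + \Vert -u\Vert$ term by term rather than invoking the equivalence explicitly, but the argument is the same.
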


\begin{proof}
Let $T$ satisfies the condition $(\mathbf{NR25})$. Using the conditions $(%
\mathbf{NR25})$ and $(\mathbf{N3})$, we have%
\begin{equation*}
\begin{array}{l}
\Vert 0,Tx-Ty,Ty-Tx\Vert =\Vert 0\Vert +\Vert Tx-Ty\Vert +\Vert Ty-Tx\Vert
=2\Vert Tx-Ty\Vert \\
<2\max \{\Vert x-y\Vert ,\Vert x-Tx\Vert ,\Vert y-Ty\Vert ,\Vert x-Ty\Vert
,\Vert y-Tx\Vert \} \\
=\max \{2\Vert x-y\Vert ,2\Vert x-Tx\Vert ,2\Vert y-Ty\Vert ,2\Vert
x-Ty\Vert ,2\Vert y-Tx\Vert \} \\
=\max \{\Vert x-y\Vert +\Vert y-x\Vert ,\Vert x-Tx\Vert +\Vert Tx-x\Vert
,\Vert y-Ty\Vert +\Vert Ty-y\Vert , \\
\Vert x-Ty\Vert +\Vert Ty-x\Vert ,\Vert y-Tx\Vert +\Vert Tx-y\Vert \} \\
=\max \{\Vert 0,x-y,y-x\Vert ,\Vert 0,Tx-x,x-Tx\Vert ,\Vert 0,Ty-y,y-Ty\Vert
, \\
\Vert 0,Ty-x,x-Ty\Vert ,\Vert 0,Tx-y,y-Tx\Vert \}%
\end{array}%
\end{equation*}%
and so the condition $(\mathbf{NS25})$ is satisfied.
\end{proof}

Finally, we give the relationship between Theorem \ref{thm1} and the
following theorem.

\begin{theorem}
\cite{paula} \label{thm2} Let $X$ be a reflexive Banach space and $A$ be a
nonempty, closed, bounded and convex subset of $X$, having normal structure.
If $T:A\rightarrow A$ is a continuous self-mapping satisfying the condition $%
(\mathbf{NR25})$ then $T$ has a unique fixed point in $A$.
\end{theorem}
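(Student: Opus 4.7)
The plan is to obtain Theorem \ref{thm2} as a consequence of Theorem \ref{thm1} by transporting the hypotheses along the norm $\leftrightarrow$ $S$-norm correspondence set up in Section \ref{sec:2}. Given a reflexive Banach space $(X,\Vert .\Vert)$, I would first invoke Proposition \ref{prop2} to form the induced $S$-norm $\Vert x,y,z\Vert = \Vert x\Vert + \Vert y\Vert + \Vert z\Vert$. The crucial identity is
\begin{equation*}
\Vert 0,u-v,v-u\Vert = 2\Vert u-v\Vert,
\end{equation*}
which shows that the notions of convergence, Cauchy sequence, boundedness, and closedness on $X$ with respect to $\Vert .\Vert$ and with respect to the induced $S$-norm coincide. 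In particular, completeness transfers, so $(X,\Vert .,.,.\Vert)$ is an $S$-Banach space.

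Next I would verify that the geometric hypotheses on $A$ carry over. Convexity depends only on the linear structure, so it is unchanged. Closedness and boundedness of $A$ follow from the identity above. For reflexivity, Lemma \ref{lem4} characterizes reflexivity of the $S$-Banach space via the finite intersection property for decreasing sequences of nonempty, bounded, closed, convex subsets; since those three properties are identical under the two notions, Lemma \ref{lem1} gives that $(X,\Vert .,.,.\Vert)$ is a reflexive $S$-Banach space iff $(X,\Vert .\Vert)$ is a reflexive Banach space. For $S$-normal structure, observe that for any convex subset $B \subseteq A$,
\begin{equation*}
\delta^{s}(B) = 2\,\mathrm{diam}(B), \qquad r_{u}^{s}(B) = 2\sup_{x\in B}\Vert u-x\Vert,
\end{equation*}
so a point $u$ is non-$S$-diametral exactly when it is non-diametral in the classical sense. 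Hence normal structure of $A$ implies $S$-normal structure of $A$.

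With all hypotheses of Theorem \ref{thm1} now verified for $(X,\Vert .,.,.\Vert)$ and $A$, I would apply Proposition \ref{prop8} to conclude that a continuous self-map $T:A\to A$ satisfying $(\mathbf{NR25})$ automatically satisfies $(\mathbf{NS25})$ on the induced $S$-normed space. Theorem \ref{thm1} then yields a unique fixed point of $T$ in $A$, which is exactly the conclusion of Theorem \ref{thm2}.

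The main obstacle I anticipate is the transfer of reflexivity, since this is not directly a metric-type property but a duality-theoretic one; however, Lemma \ref{lem4} reformulates it in terms of decreasing intersections of closed bounded convex sets, and once one observes that these three adjectives mean the same thing under $\Vert .\Vert$ and $\Vert .,.,.\Vert$, the transfer becomes immediate. Everything else reduces to the scaling identity $\Vert 0,u-v,v-u\Vert = 2\Vert u-v\Vert$ and Proposition \ref{prop8}, which do the bookkeeping between the two frameworks.
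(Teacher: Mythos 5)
The paper does not actually prove Theorem \ref{thm2}: it is quoted from Oliveria \cite{paula} as a known result, and the paper's logic runs in the opposite direction (Theorem \ref{thm1} is proved from scratch by adapting Oliveria's argument, and Theorem \ref{thm2} is then invoked only as a benchmark, with the unproved closing remark that the two theorems ``coincide'' for the $S$-norm generated by $\Vert .\Vert$). Your proposal instead derives Theorem \ref{thm2} as a corollary of Theorem \ref{thm1}, which is logically sound and in effect supplies the verification behind that closing remark. The bookkeeping is right: for the induced $S$-norm one has $\Vert 0,u-v,v-u\Vert = 2\Vert u-v\Vert$, so convergence, Cauchyness, continuity of $T$, closedness, boundedness, diameters and Chebyshev radii all transfer with a factor of $2$, non-diametral points correspond exactly to non-$S$-diametral points, and Proposition \ref{prop8} converts $(\mathbf{NR25})$ into $(\mathbf{NS25})$ (its pointwise nature lets you restrict it to $x,y\in A$ even though it is stated for self-maps of $X$). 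The one place where you lean on something the paper leaves genuinely vague is reflexivity: the paper never defines a reflexive $S$-Banach space, and the ``proof'' of Lemma \ref{lem4} is empty, so your transfer of reflexivity works only if one takes the intersection property of Lemma \ref{lem4} as the definition --- which you correctly flag, and which is the only reading under which the use of reflexivity in the proof of Theorem \ref{thm1} makes sense. What your route buys is a self-contained deduction of the classical theorem from the new one; what the paper's route (citation) buys is independence from the correctness of the long proof of Theorem \ref{thm1}.
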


Theorem \ref{thm1} and Theorem \ref{thm2} are coincide when $X$ is an $S$%
-Banach space obtained by the $S$-norm generated by $\Vert .\Vert $.
Clearly, Theorem \ref{thm1} is a generalization of Theorem \ref{thm2} as we
have seen in Section \ref{sec:2} that there are $S$-norms which are not
generated by any norm.

\end{document}